\documentclass[preprint,12pt]{elsarticle}
\usepackage{epsf}
\usepackage{amssymb,latexsym,amsmath,amscd,array,graphicx,amsthm}
\usepackage{mathtools}
\usepackage{hyperref}
\hypersetup{hidelinks}

\swapnumbers
\numberwithin{equation}{section}

\makeatletter
\def\ps@pprintTitle{%
   \let\@oddhead\@empty
   \let\@evenhead\@empty
   \def\@oddfoot{\reset@font\hfil}%
   \def\@evenfoot{\reset@font\hfil}%
}
\makeatother

\newcommand{\R}{\mathbb{R}}

\newcommand{\N}{\mathbb{N}}

\newcommand{\im}{\mathrm{im}}
\newcommand{\codim}{\mathrm{codim}}
\newcommand{\ind}{\mathrm{ind}}

\newcommand{\interior}{\mathrm{int}}

\newcommand{\sky}{\mathrm{sky}}
\newcommand{\grad}{\mathrm{grad}}

\theoremstyle{plain}
\newtheorem{thm}{Theorem}[section]
\newtheorem{cor}[thm]{Corollary}
\newtheorem{lem}[thm]{Lemma}
\newtheorem{prop}[thm]{Proposition}

\newtheorem{result}[thm]{Result}

\theoremstyle{definition}
\newtheorem{defin}[thm]{Definition}
\newtheorem{definthm}[thm]{Definition-Theorem}

\newtheorem{rem}[thm]{Remark}
\newtheorem{example}[thm]{Example}

\newtheorem{question}[thm]{Question}

\newcommand\theoref{Theorem~\ref}

\newcommand\lemref{Lemma~\ref}
\newcommand\remref{Remark~\ref}
\newcommand\propref{Proposition~\ref}
\newcommand\corolref{Corollary~\ref}
\newcommand\defref{Definition~\ref}

\newcommand\questionref{Question~\ref}
\newcommand\secref{Section~\ref}
\newcommand\subsecref{Subsection~\ref}

\begin{document}

\begin{frontmatter}

\title{Refocusing spacetimes need not be strongly refocusing}
\author[inst1]{Friedrich Bauermeister\fnref{orcid1}}
\ead{friedrich.bauermeister.gr@dartmouth.edu}
\fntext[orcid1]{ORCID: \href{https://orcid.org/}{0009-0006-6513-2153}}
\address[inst1]{Department of Mathematics, Dartmouth College\\
Hanover, New Hampshire, United States}

\begin{abstract}
We prove that there are globally hyperbolic spacetimes which are refocusing but not strongly refocusing. In fact, every globally hyperbolic strongly refocusing spacetime of dimension at least $3$ admits globally hyperbolic metrics which are refocusing but not strongly refocusing. This answers a question by Chernov, Kinlaw, and Sadykov. We then prove that globally hyperbolic spacetimes which are Legendrian refocusing (a notion introduced in this paper) admit globally hyperbolic strongly refocusing metrics. As a corollary, a contact Bott-Samelson type result by Frauenfelder, Labrousse, and Schlenk can be applied to Legendrian refocusing spacetimes to show that the Cauchy surface of a globally hyperbolic Legendrian refocusing spacetime of dimension at least $3$ is compact, that its fundamental group is finite, and that its universal cover has the integral cohomology ring of a compact rank one symmetric space (CROSS).
\end{abstract}
\end{frontmatter}

\section{Introduction}

Unless otherwise stated, any manifold in this paper is smooth and without boundary, and geodesics of Riemannian and Lorentzian manifolds are assumed to be maximal.

\begin{defin}[strongly refocusing spacetime]\label{def:strongly refocusing}
Let $(X,g)$ be a spacetime and let $p,q\in X$ with $p\neq q$. We say that $(X,g)$ is \textit{strongly refocusing with respect to $p$ and $q$} if every null-geodesic through $p$ also passes through $q$. In this case we also have that all null-geodesics through $q$ go through $p$, see \cite[Proposition~3.3]{Kinlaw2011}. We say that $(X,g)$ is \textit{strongly refocusing} if there exist $p\neq q$ in $X$ such that $(X,g)$ is strongly refocusing with respect to $p$ and $q$. 
\end{defin}

A weaker notion was introduced earlier by Low \cite{Low1993, Low2001, Low2006}, who gave three different definitions. A proof that the three different definitions are equivalent was later published by Kinlaw \cite[Proposition~3.7]{Kinlaw2011}.

\begin{definthm}[refocusing spacetime]\label{def:equivalent definitions of refocusing}
Let $(X,g)$ be a strongly causal spacetime. The following three conditions, corresponding to Low's three definitions of refocusing, are equivalent:
\begin{itemize}
\item[(i)] There exist $p\in X$ and an open $U\ni p$ such that for every open $V$ with $p\in V\subseteq U$ there is $q\in X\setminus V$ with every null-geodesic through $q$ entering $V$.
\item[(ii)] There exist $p\in X$ and an open $U\ni p$ such that for every open $V$ with $p\in V\subseteq U$ there is $q\in X\setminus U$ with every null-geodesic through $q$ entering $V$.
\item[(iii)] There exist $p\in X$ and a sequence $(q_n)_{n\in\N}$ which does not have $p$ as an accumulation point such that for every open neighborhood $V$ of $p$ there is some $N\in\N$ such that every null-geodesic through $q_n$ goes through $V$ for $n\geq N$. In this case we say that $(X,g)$ is \textit{refocusing with respect to the point $p\in X$ and the sequence $(q_n)_{n\in \N}$ in $X$}.
\end{itemize}
We say that $(X,g)$ is a refocusing spacetime if any, hence every, of these three equivalent conditions is fulfilled. The implications (iii)$\Leftrightarrow$(ii)$\Rightarrow$(i) hold for arbitrary spacetimes. We use formulation (iii) as our working definition throughout.
\end{definthm}
Every strongly refocusing spacetime is refocusing.
The following question is due to Chernov, Kinlaw, and Sadykov \cite{ChernovKinlawSadykov2010}.
\begin{question}[{\cite{ChernovKinlawSadykov2010}}]\label{quest:Are there refocusing non-strongly refocusing spacetimes?}
Are there globally hyperbolic refocusing spacetimes which are not strongly refocusing?
\end{question}

A weaker form of \questionref{quest:Are there refocusing non-strongly refocusing spacetimes?} was answered affirmatively by Kinlaw \cite{Kinlaw2011}, who exhibited a globally hyperbolic spacetime that is refocusing at a point but fails to be strongly refocusing at that same point.

\begin{example}[{\cite{Kinlaw2011}}]\label{ex:S2Rinterval}
Let $h$ be the round metric on $S^2$ and let $(X,g)=(S^2\times(-\pi,\pi),\,h-dt^2)$. Fix $x\in S^2$ and let $-x$ be its antipodal point. The sequence $q_n=(-x,\pi-\tfrac{1}{n})$ shows that $(X,g)$ is refocusing with respect to $p=(x,0)$, but it is not strongly refocusing with respect to $p$. Note however that $(X,g)$ \emph{is} strongly refocusing at other points.
\end{example}

We resolve \questionref{quest:Are there refocusing non-strongly refocusing spacetimes?} in the affirmative, proving the following, stronger theorem.

\begin{result}[\theoref{thm:main}]
Every globally hyperbolic strongly refocusing spacetime $(X,g)$ of dimension at least $3$ admits a globally hyperbolic metric $g'$ on $X$ which is refocusing but not strongly refocusing.
\end{result}

To place this in the context of Low's reconstruction program, let $(X,g)$ be a globally hyperbolic spacetime, let $\mathcal{N}$ denote its space of lightrays, and let $\Sigma$ denote its set of skies. For an event $p\in X$, its \textit{sky} is the subset $\sky_p\coloneqq \{[\alpha]\in\mathcal{N}\mid \alpha \text{ is a null geodesic through } p\}.$ A sensible thing to do is to endow $\Sigma$ with a topology. Low proposes the following choice of topology, which he calls the \textit{reconstructive topology}.

\begin{defin}[coarse topology on the space of skies]
Let $(X,g)$ be a globally hyperbolic spacetime, let $\mathcal{N}$ be the space of lightrays of $(X,g)$ and let $\Sigma$ be the space of skies. Consider $\Sigma$ as a subspace $\mathcal{C}(\mathcal{N})\coloneqq \{S \mid S \subseteq \mathcal{N} \text{ closed} \}$ equipped with the upper Vietoris topology, i.e. the topology generated by sets of the form $\{S_1 \in \mathcal{C}(\mathcal{N}) \mid S_1\subseteq U_{\mathcal{N}}\}$ where $U_{\mathcal{N}}\subseteq \mathcal{N}$ is open. We call this topology on $\Sigma$ the \textit{coarse topology} on $\Sigma$. We denote the set $\Sigma$ equipped with this topology by $\Sigma_c$.
\end{defin}

\begin{lem}
Let $(X,g)$ be a globally hyperbolic spacetime. Let $p\in X$. Then sets of the form $U_{\Sigma}\coloneqq \{\sky\in \Sigma_c \mid \alpha \text{ intersects } U \text{ for all } \alpha \in \sky\},$ where $U$ is an open neighborhood of $p$, form a neighborhood basis of $\sky_p$ in $\Sigma_c$.
\end{lem}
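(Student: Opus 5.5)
We must show two things: that each $U_\Sigma$ is open in $\Sigma_c$ and contains $\sky_p$, and that every basic open neighborhood of $\sky_p$ contains some $U_\Sigma$. The dictionary between the two sides is the set $\mathcal{N}_U\coloneqq\{[\alpha]\in\mathcal{N}\mid \alpha\cap U\neq\emptyset\}$. Since $U_\Sigma=\{\sky\in\Sigma_c\mid \sky\subseteq \mathcal{N}_U\}$, it suffices to prove that $\mathcal{N}_U$ is open in $\mathcal{N}$, and that the sets $\mathcal{N}_U$ are cofinal among the open neighborhoods of the compact set $\sky_p$.

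\emph{Step 1: $U_\Sigma$ is an open neighborhood of $\sky_p$.} Every null geodesic through $p$ meets $U$, so $\sky_p\in U_\Sigma$. For openness, we show that $\mathcal{N}_U$ is open, and then $U_\Sigma$ is a basic set of the upper Vietoris topology. Recall that $\mathcal{N}$ can be identified with the projectivized future null bundle over a smooth spacelike Cauchy surface, and that its topology is the quotient topology from the set of null directions $PN\subseteq PTX$. The preimage of $\mathcal{N}_U$ in $PN$ is the set of null directions whose geodesic meets $U$. This set is open, because geodesics depend continuously on initial data (through the geodesic flow) and $U$ is open: if $\alpha(t_0)\in U$, then nearby initial data give geodesics passing through $U$ near time $t_0$. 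Hence $\mathcal{N}_U$ is open.

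\emph{Step 2: cofinality.} Let $W$ be a basic neighborhood of $\sky_p$, so $W=\{S\mid S\subseteq U_{\mathcal{N}}\}$ with $U_{\mathcal{N}}\supseteq\sky_p$ open. We need an open $U\ni p$ with $\mathcal{N}_U\subseteq U_{\mathcal{N}}$; then $U_\Sigma\subseteq W$. We argue by contradiction. Take a shrinking neighborhood basis $U_n$ of $p$ (say, small balls in a chart), and suppose for each $n$ there is a null geodesic $\alpha_n$ meeting $U_n$ at a point $x_n$ with $[\alpha_n]\notin U_{\mathcal{N}}$. Normalize the initial null direction of $\alpha_n$ at $x_n$ using an auxiliary Riemannian metric. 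Since $x_n\to p$ and the unit null directions over a compact neighborhood of $p$ form a compact set, a subsequence of the directions converges to a null direction $v$ at $p$. The corresponding geodesic $\alpha$ passes through $p$, so $[\alpha]\in\sky_p\subseteq U_{\mathcal{N}}$. The projection $PN\to\mathcal{N}$ is continuous, so $[\alpha_n]\to[\alpha]$, and therefore $[\alpha_n]\in U_{\mathcal{N}}$ for large $n$. This is a contradiction.

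The main obstacle is the continuity and openness of the passage from null directions to $\mathcal{N}$. This is where global hyperbolicity is used: it ensures that $\mathcal{N}$ is a (Hausdorff) manifold and that the map $PN\to\mathcal{N}$ is a continuous open quotient map. I would justify this by identifying $\mathcal{N}$ with the null directions over a Cauchy surface $S$. Each null geodesic meets $S$ exactly once, and its intersection point with $S$ depends continuously on the initial data, since $S$ is a transversally crossed hypersurface.
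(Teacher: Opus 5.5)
Your proof is correct and follows essentially the same route as the paper. Openness of $U_{\Sigma}$ comes from openness of the set of lightrays meeting $U$, and cofinality is proved by contradiction along a shrinking neighborhood basis, extracting a convergent subsequence of lightrays whose limit lies in $\sky_p$. The only difference is where you take the limit: in the compact bundle of unit future null vectors over a compact neighborhood of $p$, pushed to $\mathcal{N}$ by continuity, instead of the paper's appeal to compactness in $\mathcal{N}$ of the lightrays meeting $\overline{U_1}$. Your version has the advantage of making explicit why the limit lightray passes through $p$, which the paper leaves implicit.
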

\begin{proof}
The fact that $U_{\Sigma}\subseteq \Sigma_c$ is open follows from the fact that $\{\alpha \in \mathcal{N} \mid \alpha \text{ intersects } U\}\subseteq \mathcal{N}$ is open. Let $V\subseteq \mathcal{N}$ be an open set and let $\sky_p \subseteq V$. For any open neighborhood $U\subseteq X$ of $p$ we have that $\sky_p \in U_{\Sigma}$. We have to show that for sufficiently small $U$ we also have $\sky \in U_{\Sigma} \implies \sky \subseteq V$. Assume not. Since $X$ is a manifold, we can choose a neighborhood basis $(U_n)_{n\in \N}$ of $p$ with $U_{n+1}\subseteq U_n$ and $\overline{U_n}$ compact for all $n\in \N$. By assumption, there is then a sequence $(\sky_n)_{n\in \N}$ with $\sky_n \in (U_n)_{\Sigma}$ such that for each $n\in \N$ there exists some $\alpha_n \in \sky_n$ with $\alpha_n \notin V$. Since $(X,g)$ is globally hyperbolic, the set of lightrays intersecting the compact set $\overline{U_1}$ is compact in $\mathcal{N}$, so we can take a convergent subsequence $(\alpha_{n_k})_{k\in \N}$. Since $V$ is open and since $\alpha_n \notin V$ for all $n\in \N$ we conclude that $\alpha \coloneqq \lim_{k\to\infty} \alpha_{n_k} \notin V$. On the other hand, since $\{p\} = \bigcap_{n\in \N} U_n$, we have $\alpha \in \sky_p \subseteq V$, so we have arrived at a contradiction.
\end{proof}

\begin{prop}
Let $(X,g)$ be a globally hyperbolic spacetime and let $\Sigma$ be its set of skies. The map $S: X\to \Sigma_c$ defined by $p\to \sky_p$ is continuous and it is a homeomorphism if and only if $(X,g)$ is non-refocusing.
\end{prop}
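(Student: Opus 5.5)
We prove the two directions separately, using the preceding Lemma throughout: the sets $U_\Sigma$, with $U$ an open neighborhood of $p$, form a neighborhood basis of $\sky_p$ in $\Sigma_c$.

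\emph{Continuity.} Fix $p\in X$ and a basic neighborhood $U_\Sigma$ of $\sky_p$, where $U\ni p$ is open. We claim $U$ is mapped into $U_\Sigma$. If $p'\in U$, then every null geodesic through $p'$ meets $U$ at $p'$ itself. Hence $\sky_{p'}\in U_\Sigma$, so $S(U)\subseteq U_\Sigma$ and $S$ is continuous at $p$.

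\emph{Injectivity.} A globally hyperbolic spacetime is strongly causal, so refocusing is well defined and the equivalences of Definition-Theorem~\ref{def:equivalent definitions of refocusing} apply. Suppose $\sky_p=\sky_q$ with $p\neq q$. Then every null geodesic through $p$ passes through $q$, so $(X,g)$ is strongly refocusing, hence refocusing. Thus non-refocusing implies that $S$ is injective, and $S$ is then a continuous bijection onto $\Sigma_c$ (surjectivity holds by definition of $\Sigma$). It remains to show that $S$ is open onto its image exactly when $(X,g)$ is non-refocusing.

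\emph{Openness is equivalent to non-refocusing.} Since $S$ is continuous and the sets $U_\Sigma$ form neighborhood bases, openness of $S$ at $p$ (in the bijective case) means the following: for every open $V\ni p$ there is an open $U\ni p$ such that every $q$ with all null geodesics through $q$ meeting $U$ lies in $V$.

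First suppose $(X,g)$ is refocusing with respect to $p$ and a sequence $(q_n)$, in the sense of formulation (iii). If $S$ is not injective we are done. Otherwise choose an open $V\ni p$ and a subsequence with $q_n\notin V$, which is possible because $p$ is not an accumulation point of $(q_n)$. For any open $U\ni p$, refocusing gives $\sky_{q_n}\in U_\Sigma$ for $n$ large. So $S(V)$ contains no neighborhood of $\sky_p$: any set $U_\Sigma$ contains some $\sky_{q_n}$, and $q_n\notin V$ together with injectivity gives $\sky_{q_n}\notin S(V)$. Hence $S$ is not a homeomorphism.

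Conversely, suppose $S$ is a continuous bijection that fails to be open at some $p$. Then there exist an open $V\ni p$ and, for a nested basis $(U_n)$ of $p$, points $q_n\notin V$ with $\sky_{q_n}\in (U_n)_\Sigma$, that is, every null geodesic through $q_n$ meets $U_n$. The sequence $(q_n)$ avoids $V$, so $p$ is not an accumulation point of it. Moreover, for any neighborhood $W$ of $p$ we have $U_n\subseteq W$ for $n$ large, so every null geodesic through $q_n$ meets $W$. This is exactly formulation (iii), so $(X,g)$ is refocusing.

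The main obstacle is the bookkeeping in the openness step. One must check that openness of a bijection onto $\Sigma_c$ reduces pointwise to the neighborhood-basis statement, and this is precisely where the preceding Lemma, which rests on global hyperbolicity, is essential. The non-injective case is harmless, since it already implies refocusing.
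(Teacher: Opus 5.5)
Your proof is correct. It differs from the paper's in being self-contained.

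For the implication ``refocusing $\Rightarrow$ not a homeomorphism'' you argue as the paper does. A refocusing sequence $(q_n)$ has $\sky_{q_n}$ entering every basic neighborhood $U_\Sigma$ of $\sky_p$ while staying away from $p$, so $S^{-1}$ cannot be continuous. Your case split on injectivity is harmless; the paper avoids it by noting that a homeomorphism would force $q_n=S^{-1}(\sky_{q_n})\to p$.

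For continuity of $S$ and for ``non-refocusing $\Rightarrow$ homeomorphism'', the paper simply cites Low and Kinlaw. You instead derive both from the preceding Lemma:
\begin{itemize}
\item continuity is immediate from $S(U)\subseteq U_\Sigma$;
\item injectivity holds because $\sky_p=\sky_q$ with $p\neq q$ is strong refocusing;
\item failure of openness at $p$ produces, via a nested basis $(U_n)$, points $q_n\notin V$ with $\sky_{q_n}\in(U_n)_\Sigma$, which is literally formulation (iii).
\end{itemize}

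Your route shows that, once the Lemma is available, the whole proposition is elementary. The openness step uses only that the $(U_n)_\Sigma$ are open sets containing $\sky_p$, which is the easy half of the Lemma. The compactness half, where global hyperbolicity enters, is needed only for continuity of $S$ and for the refocusing direction (to know that every neighborhood of $\sky_p$ contains some $U_\Sigma$). The paper's version is shorter, but it relies on the external reference for half of the equivalence and for continuity.
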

\begin{proof}
The fact that $(X,g)$ being non-refocusing implies that the map $X\to \Sigma_c$ is a homeomorphism is first stated by Low in \cite{Low2001} and proved by Kinlaw in \cite{Kinlaw2011}. The converse direction follows from the fact that if $(X,g)$ is refocusing w.r.t. $p$ and $\{q_n\}_{n\in \N}$, then $q_n$ does not converge to $p$ but $\sky_{q_n}\to \sky_p$ in $\Sigma_c$.
\end{proof}

In many ways the most natural way to view $\Sigma$ as a subspace of a larger space is to embed $\Sigma$ into the space $\mathcal{L}$ of smooth Legendrian submanifolds of $\mathcal{N}$ which are Legendrian isotopic to one (hence every) sky. This is motivated for example by the Legendrian Low conjecture, which states that points in a globally hyperbolic spacetime $(X,g)$ are causally related if and only if their skies are Legendrian linked in $\mathcal{N}$. The Legendrian Low conjecture was proved for a broad class of spacetimes by Chernov and Nemirovski \cite{ChernovNemirovski2010}. Strong refocusing is an obstruction to the Legendrian Low conjecture.

We equip $\mathcal{L}$ with the $\mathcal{C}^{\infty}$ topology, and use this to define a finer topology on the space of skies.

\begin{defin}[fine topology on the space of skies]
Let $(X,g)$ be a globally hyperbolic spacetime, let $\mathcal{N}$ be the space of lightrays of $(X,g)$ and let $\Sigma$ be the space of skies. Let $\mathcal{L}$ be the space of smooth Legendrian submanifolds of $\mathcal{N}$ Legendrian isotopic to a sky, equipped with the $\mathcal{C}^{\infty}$ topology. We call the subspace topology on $\Sigma$ as a subset of $\mathcal{L}$ the \textit{fine topology}. We denote the set $\Sigma$ equipped with this topology by $\Sigma_f$.
\end{defin}

\begin{defin}
Let $(X,g)$ be a globally hyperbolic spacetime. We say that $(X,g)$ is \textit{Legendrian refocusing} with respect to a point $p\in X$ and a sequence $(q_n)_{n\in \N}$ in $X$ if $p$ is not an accumulation point of $\{q_n\}_{n\in \N}$ but $\sky_{q_n} \to \sky_{p}$ in $\mathcal{L}$. We say that $(X,g)$ is \textit{Legendrian refocusing} if it is Legendrian refocusing with respect to some point $p\in X$ and some sequence $(q_n)_{n\in \N}$ in $X$.
\end{defin}

The condition of a spacetime being Legendrian refocusing sits between being strongly refocusing and being refocusing. The benefit of this definition is that it fits nicely into the Legendrian picture.

\begin{rem}
Let $(X,g)$ be a globally hyperbolic spacetime. Then $(X,g)$ is Legendrian refocusing if and only if the sky map $S:X\to\Sigma_f$ fails to be a homeomorphism, equivalently if and only if the sky map $S:X\to \mathcal{L}$ fails to be an embedding.
\end{rem}

The metrics constructed in \theoref{thm:main} are examples of globally hyperbolic spacetimes which are Legendrian refocusing but not strongly refocusing.

We also prove the following.

\begin{result}[\theoref{thm:Legendrian refocusing implies existence of strongly refocusing metric}]
Let $(X,g)$ be a globally hyperbolic spacetime which is Legendrian refocusing. Then $X$ admits a globally hyperbolic metric $g'$ which is strongly refocusing.
\end{result}

\begin{rem}\label{rem:BautistaIbortLafuente}
In \cite{BautistaIbortLafuente2015}, Bautista, Ibort, and Lafuente treat essentially the same question as \questionref{quest:Are there refocusing non-strongly refocusing spacetimes?}. They define the notion of a spacetime being sky-separating, which in our language corresponds to a spacetime not being strongly refocusing. They claim in \cite[Corollary 4.8]{BautistaIbortLafuente2015} that sky-separating globally hyperbolic spacetimes are non-refocusing. The examples constructed in this paper show that this statement is false.
\end{rem}

\begin{rem}
Throughout this paper, whenever we say that a globally hyperbolic spacetime $(X,g)$ admits a globally hyperbolic metric $g'$ fulfilling some property, the reader is welcome to conclude that $(X,g')$ and $(X,g)$ have diffeomorphic Cauchy surfaces. A priori this assumption is not justified; there certainly are diffeomorphic globally hyperbolic spacetimes with non-diffeomorphic Cauchy surfaces. But for our constructions it is true.
\end{rem}

The paper is organized as follows. In \secref{sec:preliminaries} we provide preliminaries, including the definition of globally hyperbolic spacetimes and a discussion of the space of lightrays as a contact manifold. In \secref{sec:Legendrian refocusing spacetimes need not be strongly refocusing} we prove \theoref{thm:main}, namely that every globally hyperbolic strongly refocusing spacetime of dimension at least $3$ admits a globally hyperbolic Legendrian refocusing, non-strongly refocusing metric, using the Sard--Smale transversality theorem for Banach manifolds. In \secref{sec:Legendrian refocusing spacetimes admit strongly refocusing metrics} we prove \theoref{thm:Legendrian refocusing implies existence of strongly refocusing metric}, which states that every globally hyperbolic Legendrian refocusing spacetime admits a strongly refocusing metric. In Appendix~\ref{appendix: Topological consequences of refocusing and strong refocusing} we include a discussion of topological consequences of being refocusing and strongly refocusing, as well as a discussion of Riemannian analogues of (strong) refocusing. Since the manifolds admitting globally hyperbolic Legendrian refocusing metrics are proven to be exactly those admitting globally hyperboli  strongly refocusing metrics, being Legendrian refocusing carries the same topological consequences as being strongly refocusing.

\begin{result}[\corolref{cor:Legendrian refocusing implies integral cohomology of a CROSS}]
Let $(X,g)$ be a globally hyperbolic spacetime of dimension at least $3$ which is Legendrian refocusing. Then any Cauchy surface of $(X,g)$ is compact with finite fundamental group, and its universal cover has the integral cohomology ring of a compact rank one symmetric space (CROSS).
\end{result}

\section{Preliminaries}\label{sec:preliminaries}
We quickly review the definition and some key properties of globally hyperbolic spacetimes.

\begin{defin}[globally hyperbolic spacetime, Cauchy surface]\label{def:gh}
A spacetime $(X,g)$ is \textit{causal} if it contains no closed causal curve, and \textit{globally hyperbolic} if it is causal and if $J^+(p)\cap J^-(q)$ is compact for all $p,q\in X$. A subset $M\subset X$ is a \textit{topological Cauchy surface} of $(X,g)$ if it is a topological hypersurface meeting every inextendible timelike curve in exactly one point, and an \textit{acausal Cauchy surface} if it meets every inextendible causal curve in exactly one point.
\end{defin}

A spacetime $(X,g)$ is globally hyperbolic if and only if it admits an acausal topological Cauchy surface $M$. In that case any two topological acausal Cauchy surfaces are homeomorphic and $X$ is homeomorphic to $M\times\R$; see Geroch \cite{Geroch1970} and \cite{HawkingEllis}. Smooth refinements of all the above are due to Bernal and S\'anchez. In particular: $(X,g)$ is globally hyperbolic if and only if it admits a smooth, spacelike Cauchy surface $M$; any two smooth Cauchy surfaces of $(X,g)$ are diffeomorphic; and $X$ is diffeomorphic to $M\times\R$ \cite{BernalSanchez2003, BernalSanchez2005}. Until Bernal and S\'anchez \cite{BernalSanchez2007} showed that globally hyperbolic spacetimes can be defined as causal spacetimes with compact causal diamonds, they were defined by requiring them to be strongly causal. Globally hyperbolic spacetimes are the ones compatible with Penrose's strong cosmic censorship hypothesis \cite{Penrose1998}, and are generally regarded as the physically reasonable spacetimes.

\begin{defin}[temporal functions]
For a spacetime $(X,g)$, a smooth function 
$\mathcal{T}:X\to \R$ is called a \textit{temporal function} if its gradient is a past-pointing timelike vectorfield. A surjective temporal function $\mathcal{T}:X\to \R$ is called \textit{Cauchy} if its level sets are Cauchy surfaces, i.e. if for all $t\in \R$ we have that $\mathcal{T}^{-1}(t)\subset X$ is a Cauchy surface. Every globally hyperbolic spacetime $(X,g)$ admits a surjective Cauchy temporal function $\mathcal{T}:X\to \R$. This was proved by Bernal and S\'anchez \cite{BernalSanchez2003, BernalSanchez2004, BernalSanchez2005}.
\end{defin}
Temporal functions are strictly increasing along future-pointing causal curves.

Low showed in \cite{Low2006} that the set $\mathcal{N}$ of future-directed, unparametrized, inextendible null geodesics of a strongly causal spacetime $(X,g)$ carries a natural smooth structure and a natural contact structure. When $(X,g)$ is globally hyperbolic, this contact manifold can be identified with the spherical cotangent bundle of any spacelike Cauchy surface. To see this, let $M\subset X$ be a spacelike Cauchy surface. Every future-directed null geodesic $\alpha$ meets $M$ in a unique point, say $\alpha(t)=p$. Let $\pi:T_pX\to T_pM$ be the orthogonal projection and set $v=\pi(\alpha'(t))\in T_pM$. Note that $\ker(\pi)\subseteq T_pX$ is timelike and since $\alpha'(t)$ is null we have that $v\neq 0$. Changing the affine parametrization of $\alpha$ by an orientation-preserving reparametrization changes $v$ by multiplication with a positive scalar. Thus the assignment $[\alpha]\mapsto [v]$ defines a map from $\mathcal{N}$ to the spherical tangent bundle $STM$. This map is a diffeomorphism. Using the Riemannian metric induced by $g|_M$ on $M$, we identify $STM$ with $ST^{\ast}M$, and hence obtain a diffeomorphism $\mathcal{N}\simeq ST^{\ast}M$. One checks that this is a contactomorphism. For an event $p\in X$, its \textit{sky} is the subset $\sky_p\coloneqq \{[\alpha]\in\mathcal{N}\mid \alpha \text{ is a null geodesic through } p\}.$ Each sky is a Legendrian submanifold of $\mathcal{N}$. Moreover, if the chosen Cauchy surface $M$ contains $p$, then under the identification $\mathcal{N}\simeq ST^{\ast}M$, $\sky_p$ is exactly the fiber $ST_p^{\ast}M$. Let $(X,g)$ be a globally hyperbolic spacetime, and let $\Sigma$ denote its set of skies. Then Low's program asks us to consider in what way we can reconstruct $(X,g)$ from $\Sigma$.

\section{Legendrian refocusing spacetimes need not be strongly refocusing}\label{sec:Legendrian refocusing spacetimes need not be strongly refocusing}
In this section we prove that Legendrian refocusing globally hyperbolic spacetimes need not be strongly refocusing. We first explain the strategy. Start with a globally hyperbolic spacetime $(X,g)$ which is strongly refocusing with respect to points \(p,q\), with
$q\in J^+(p)$. After choosing a spacelike Cauchy surface $M$ through $q$ and setting $X'=I^-(M)$, the restricted spacetime $(X',g|_{X'})$ is Legendrian refocusing with respect to $p$ and a sequence $(q_n)_{n\in\N}\subset X'$ which converges to $q$ in $X$. The main idea is to perturb the metric on $X'$ so as to destroy all
strong refocusing, while keeping the metric fixed on a regular closed set $A$ containing the null-geodesic data responsible for this Legendrian refocusing. We will use the Sard--Smale transversality theorem to show that a residual set of such perturbations destroys all strong refocusing not already contained in $A$. A minimality argument is then used to choose $A$ so that no strong refocusing is contained in $A$ in the first place.

\subsection{A Banach manifold of Lorentzian metrics and refocusing within subsets}\label{subsec:banach}

\begin{defin}
Let $X$ be a smooth manifold. We say that $A\subseteq X$ is a \textit{regular closed} set if it is the closure of its interior. We say that $A\subseteq X$ is a \textit{regular open} set if it is the interior of its closure.
\end{defin}
A set is regular closed if and only if its complement is regular open. We will use the following fact: If two $\mathcal{C}^k$ tensorfields $g_1$ and $g_2$ on a manifold $X$ agree on a regular closed set $A\subseteq X$, then they agree on $A$ to $k$-th order. This is true because if they agree on $A$, they agree on $\interior(A)$, on which they agree to $k$-th order. Since $A = \overline{\interior(A)}$ and since $g_1$ and $g_2$ are $\mathcal{C}^k$, we get that $g_1$ and $g_2$ agree to $k$-th order on $A$ by continuity.

\begin{defin}\label{def: Giving a subset of Lorentz metrics the structure of a Banach manifold}
Let $k\geq 0$. Let $(X,g)$ be a globally hyperbolic spacetime of dimension $n$, with $g$ a $\mathcal{C}^k$ metric. Fix a smooth auxiliary Riemannian metric $e$ on $X$ with Levi-Civita connection $\nabla$. Since global hyperbolicity is an open condition in Whitney's fine $\mathcal{C}^0$ topology \cite{NavarroMinguzzi2011}, there exists a continuous function $\varepsilon: X \to (0,\infty)$ such that any symmetric $2$-tensor $h$ satisfying $|h - g|_e < \varepsilon$ is a globally hyperbolic Lorentzian metric on $X$. Choose a smooth function $W: X \to (0,\infty)$ satisfying $W < \varepsilon$. We define $\Gamma^k_{W,0}(S^2T^{\ast}X)$ as the vectorspace of $\mathcal{C}^k$ symmetric $2$-tensors for which
$$\|u\|_{\mathcal{C}^k_W} \coloneqq \max_{0 \leq j \leq k} \sup_{x \in X} \frac{|\nabla^j u(x)|_e}{W(x)}$$
is finite and which vanish at infinity in the sense that
$$\lim_{x\to\infty}\max_{0\leq j\leq k}\frac{|\nabla^j u(x)|_e}{W(x)}=0.$$
Here $x\to\infty$ means that for every $\delta>0$ there exists a compact set $K\subset X$ such that
$$\max_{0\leq j\leq k}\sup_{x\in X\setminus K}\frac{|\nabla^j u(x)|_e}{W(x)}<\delta.$$
The space $\Gamma^k_{W,0}(S^2T^{\ast}X)$ equipped with the norm $\|u\|_{\mathcal{C}^k_W}$ is a separable Banach space. Let
$$\mathcal{U}^k \coloneqq \left\{u\in \Gamma^k_{W,0}(S^2T^*X)\ \middle|\ \|u\|_{\mathcal{C}^0_W}<1\right\}.$$
This is an open subset of $\Gamma^k_{W,0}(S^2T^*X)$. We define the space of globally hyperbolic metrics near $g$ as
$$\mathcal{M}^k \coloneqq g+\mathcal{U}^k.$$
The space $\mathcal{M}^k$ thus carries the structure of a separable Banach manifold. Throughout the rest of this paper we will use ``Banach manifold'' to mean ``separable Banach manifold''. By construction, every $h \in \mathcal{M}^k$ satisfies $|h - g|_e < W < \varepsilon$ everywhere, ensuring that every metric in $\mathcal{M}^k$ is a globally hyperbolic Lorentzian metric. For a regular closed subset $A \subseteq X$, let $\mathcal{Z}^k_A \subseteq \Gamma^k_{W,0}(S^2 T^*X)$ be the closed linear subspace of tensors which vanishes on $A$, i.e., $u(x)=0 $ for all $x\in A$. We define
$$\mathcal{M}^k_{(A,g)} \coloneqq g + (\mathcal{U}^k \cap \mathcal{Z}^k_A),$$
the space of metrics near $g$ which agree with $g$ on $A$. Note that since we assume $A\subseteq X$ to be a regular closed set, metrics in $\mathcal{M}^k_{(A,g)}$ actually agree with $g$ to order $k$ on $A$. Since $\mathcal{U}^k \cap \mathcal{Z}^k_A$ is an open subset of the Banach space $\mathcal{Z}^k_A$, the space $\mathcal{M}^k_{(A,g)}$ is also a Banach manifold. Finally, the smooth limits are defined as
$$\mathcal{M}^{\infty} \coloneqq \bigcap_{k \geq 0} \mathcal{M}^k, \quad \mathcal{M}^{\infty}_{(A,g)} \coloneqq \bigcap_{k \geq 0} \mathcal{M}^k_{(A,g)}.$$
Equipped with the inverse limit topologies, $\mathcal{M}^{\infty}$ and $\mathcal{M}^{\infty}_{(A,g)}$ are Fr\'echet manifolds. 
\end{defin}

\begin{defin}
Let $k\geq 2$, let $A\subseteq X$ be regular closed, let $g'\in \mathcal{M}^k_{(A,g)}$, and let $p,q\in X$, $p\neq q$. We say that $p$ and $q$ are \textit{$g'$-null-geodesically connected through} $X\setminus A$ if there exists a $g'$-null-geodesic $\alpha:[0,1]\to X$ such that $\alpha(0)=p$, $\alpha(1)=q$, and $\alpha([0,1]) \cap (X\setminus A) \neq \emptyset$. We say they are \textit{$N$-fold $g'$-null-geodesically connected through} $X\setminus A$ if there exist $N$ distinct such $g'$-null-geodesics. If this holds for some pair of distinct points $p,q \in X$, we say $g'$ is \textit{$N$-fold null-geodesically connecting through $X\setminus A$}.
\end{defin}

\begin{defin}
Let $(X,g)$ be a globally hyperbolic spacetime, let $p\in X$ and let $S\subseteq X$ be any subset. We define 
$$C_g(p,S)\coloneqq\{\alpha(t) \mid \alpha \text{ is a } g\text{-null-geodesic},\; \alpha(0)=p,\; \alpha(1) \in S,\; t\in [0,1]\}.$$
If $S=\{q\}$ we simply write $C_g(p,q)$.
\end{defin}

\begin{defin}
Let $(X,g)$ be a spacetime and let $A\subseteq X$ be any subset. We say $(X,g)$ is \textit{strongly refocusing within} $A$ if there exist $p,q\in A$, $p\neq q$ such that $(X,g)$ is strongly refocusing with respect to $p$ and $q$ and such that $C_g(p,q)\subseteq A.$ We say that $(X,g)$ is \textit{(Legendrian) refocusing within $A$} if there exists a point $p\in A$, a smooth spacelike Cauchy surface $M\subseteq X$ containing $p$, a neighborhood $U\subseteq X$ of $p$ and a sequence $(q_n)_{n\in \N}$ in $A\setminus U$ such that
\begin{itemize}
\item $(X,g)$ is (Legendrian) refocusing with respect to the point $p\in X$ and the sequence $(q_n)_{n\in \N}$
\item $C_g(q_n,M)\subseteq A$ for all $n\in \N$.

\end{itemize}
In that case we say that $(X,g)$ is (Legendrian) refocusing within $A$ with respect to the point $p$ and the sequence $(q_n)_{n\in \N}$.
\end{defin}
We note that being strongly refocusing within a regular closed subset is invariant under changes of the metric away from the subset. The following remark makes this precise. 
\begin{rem}\label{rem: strongly refocusing within a subset is stable}
Let $(X,g)$ be a globally hyperbolic spacetime, $A\subseteq X$ a regular closed set. Let $g'\in \mathcal{M}_{(A,g)}^k$ for $k\geq 2$. Then $(X,g')$ is strongly refocusing within $A$ if and only if $(X,g)$ is strongly refocusing within $A$.
\end{rem}
Refocusing and Legendrian refocusing within a regular closed subset are also stable under such changes of the metric. We prove the Legendrian case, which is the only one we use; the proof for refocusing proceeds along similar lines.
\begin{prop}\label{prop: Legendrian refocusing within a subset is stable}
Let $(X,g)$ be a globally hyperbolic spacetime, $A\subseteq X$ a regular closed set. Let $g'\in \mathcal{M}_{(A,g)}^{\infty}$. Then $(X,g')$ is Legendrian refocusing within $A$ if and only if $(X,g)$ is Legendrian refocusing within $A$.
\end{prop}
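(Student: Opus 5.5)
The plan is to prove only one implication, because the situation is symmetric. If $g'\in\mathcal{M}^\infty_{(A,g)}$, then $g$ and $g'$ agree to infinite order on $A$. Swapping the roles of $g$ and $g'$ therefore gives the converse, with the same underlying definitions; the Banach-manifold structure is irrelevant here, and only agreement to all orders on $A$ is used. So assume $(X,g)$ is Legendrian refocusing within $A$ with respect to $p\in A$, a smooth $g$-spacelike Cauchy surface $M\ni p$, a neighborhood $U$ of $p$, and a sequence $(q_n)$ in $A\setminus U$ with $C_g(q_n,M)\subseteq A$. I would show that the same data, possibly after modifying $M$ away from a neighborhood of $p$, witness Legendrian refocusing within $A$ for $g'$.

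The first step is that the relevant null geodesics coincide. Let $\alpha$ be a $g$-null geodesic with $\alpha(0)=q_n$ and $\alpha(1)\in M$. The image $\alpha([0,1])$ lies in $C_g(q_n,M)\subseteq A$, and $g=g'$ together with all derivatives along this segment. Hence the Christoffel symbols agree there, so $\alpha|_{[0,1]}$ solves the $g'$-geodesic equation and is $g'$-null. Conversely, the $g$- and $g'$-null cones at $q_n\in A$ coincide. A $g'$-null geodesic from $q_n$ with the same initial vector as some $g$-null geodesic $\alpha$ therefore agrees with $\alpha$ by ODE uniqueness, up to the first time it leaves $A$, and $\alpha$ does not leave $A$ before reaching $M$.

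The conclusion is that every $g'$-null geodesic through $q_n$ reaches $M$ inside $A$. It does so at the same point and with the same tangent vector as the corresponding $g$-geodesic. Moreover $g|_M=g'|_M$ at these points, since they lie in $A$, so the orthogonal projections $\pi$ onto $TM$ agree as well. This gives $C_{g'}(q_n,M)=C_g(q_n,M)\subseteq A$. It also shows that the images of $\sky^{g'}_{q_n}$ and $\sky^g_{q_n}$ under the map $[\alpha]\mapsto[\pi(\alpha')]\in ST^*M$ are the same Legendrian submanifold. Likewise $\sky^{g'}_p$ is the fiber $ST^*_pM$ for both metrics, because $p\in A\cap M$.

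The second step is to transport convergence. By hypothesis these common images converge in $\mathcal{C}^\infty$ to $ST^*_pM$, and $p$ is not an accumulation point of $(q_n)$ since $q_n\notin U$. This would give Legendrian refocusing of $g'$ within $A$, provided the map $\mathcal{N}_{g'}\to ST^*M$ is a legitimate contact identification, i.e.\ provided $M$ is a $g'$-Cauchy surface.

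This is the step I expect to be the main obstacle: $M$ is $g$-Cauchy, but $g'$ differs from $g$ off $A$, so $M$ need not be $g'$-spacelike or $g'$-Cauchy. I would first try to localize. By $\mathcal{C}^\infty$ convergence, for large $n$ all the relevant crossing points lie in a small ball $B\subseteq M$ around $p$, and $B\cap A$ is where all the data live. Since $g'=g$ at $p$, $B$ is $g'$-spacelike for $B$ small. Because $g'$ is globally hyperbolic, I would extend $B$ to a smooth spacelike $g'$-Cauchy surface $M'$ with $M'\cap B'=M\cap B'$ for a smaller ball $B'$, using the Bernal--S\'anchez extension results for acausal compact spacelike pieces (taking $B$ $g'$-acausal after shrinking). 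The lightray-to-cotangent map is then computed only at crossing points in $B'\cap A$, so the argument above applies verbatim with $M'$ in place of $M$.

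The remaining point to check is that, for large $n$, no $g'$-null geodesic from $q_n$ meets $M'$ before reaching $B'$ along its segment in $A$. Uniqueness of intersection with the Cauchy surface $M'$ should settle this, since each such geodesic already meets $M'$ at its crossing point in $B'$.
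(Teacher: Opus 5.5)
Your proposal is correct and follows essentially the same route as the paper. You reduce to one direction by symmetry, shrink to a small $g'$-spacelike, $g'$-acausal piece of $M$ around $p$, extend it to a smooth spacelike $g'$-Cauchy surface $M'$ via the Bernal--S\'anchez extension theorem, and use ODE uniqueness along $C_g(q_n,M)\subseteq A$ to get $C_{g'}(q_n,M')=C_g(q_n,M)$ and identical images of the skies in $ST^{\ast}$ of that piece, so the $\mathcal{C}^\infty$ convergence transfers. You also spell out two points the paper leaves implicit: the projections and metric identifications agree at the crossing points because these lie in $A$, and uniqueness of intersection with $M'$ rules out earlier crossings.
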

\begin{proof}
Let $X$ be a smooth connected manifold, let $A\subseteq X$ be regular closed and let $g,g'$ be two globally hyperbolic metrics on $X$ so that $g$ and $g'$ agree on $A$. Since $A$ is regular closed, $g$ and $g'$ then agree to infinite order on $A$. We will show that if one of $(X,g)$ and $(X,g')$ is Legendrian refocusing within $A$, then so is the other. Assume that $(X,g)$ is Legendrian refocusing within $A$, say with respect to a point $p\in A$ and a sequence $(q_n)_{n \in \N}$ in $A$. Let $M$ be the Cauchy surface through $p$ such that $C_g(q_n,M)\subseteq A$ for all $n\in \N$. Since $g$ and $g'$ and their derivatives agree on $A$, and since $p\in A$, there exists a small open subset $m\subseteq M$ containing $p$ such that the closure $\overline{m}$ of $m$ in $M$ is compact as well as acausal and spacelike with respect to $g'$. Since $\overline{m}$ is compact as well as spacelike and acausal with respect to $g'$, by the Bernal--S\'anchez extension theorem \cite[Theorem~1.1]{BernalSanchez2006}, $\overline{m}$ is contained in a smooth spacelike Cauchy surface $M'$ of $(X,g')$. Let $\mathcal{L}^g_m$ be the space of Legendrian submanifolds of $\mathcal{N}_g$ which are made up of null-geodesics through $m$ and let $\mathcal{L}^{g'}_m$ be the space of Legendrian submanifolds in $\mathcal{N}_{g'}$ which are made up of null-geodesics through $m$. Note that we can identify both $\mathcal{L}^g_m$ and $\mathcal{L}^{g'}_m$ with the space $\mathcal{L}_m$ of Legendrian submanifolds of $ST^{\ast}m$. We denote the identifications by $F_g:\mathcal{L}^g_m \to \mathcal{L}_m$ and $F_{g'}:\mathcal{L}^{g'}_m \to \mathcal{L}_m$ respectively. For sufficiently large $n$ we have that $\sky^g_{q_n} \in \mathcal{L}^g_m$ and, since $m\subseteq M$, that
$$
C_g(q_n,m)=C_g(q_n,M)\subseteq A.
$$
We claim that, for sufficiently large $n$,
$$
C_{g'}(q_n,M')=C_{g'}(q_n,m)=C_g(q_n,m)\subseteq A.
$$
For sufficiently large $n$ every $g$-null-geodesic through $q_n$ intersects $m$. Note that $g$ and $g'$ agree with each other on $A$, hence in particular on $C_g(q_n,m)$, to infinite order. At $q_n$ the $g$- and $g'$-null cones agree. For each null initial direction
at $q_n$, along the corresponding $g$-geodesic segment from $q_n$ to $m$, the metrics $g$ and $g'$ have the same Christoffel symbols. Uniqueness of solutions for the geodesic equation implies that the $g'$-geodesic with the same initial data is the same segment, so we have that $C_{g'}(q_n,m)=C_g(q_n,m)$. Further, $C_{g}(q_n,m)$ contains null-geodesic segments corresponding to every null-direction from $q_n$, the same is true for $C_{g'}(q_n,m)$. Hence $C_{g'}(q_n,M')=C_{g'}(q_n,m)$. This proves the claim. For those sufficiently large $n$ we have
$$
F_g(\sky^{g}_{q_n}) = F_{g'}(\sky^{g'}_{q_n}).
$$
Therefore
\begin{align*}
\lim_{n\to \infty} \sky^{g'}_{q_n} &= \lim_{n\to \infty} F_{g'}^{-1}(F_g(\sky^g_{q_n}))\\
&= F_{g'}^{-1}(F_g(\sky^g_p))\\
&= F_{g'}^{-1}(ST^{\ast}_p m)\\
&= \sky^{g'}_p.
\end{align*}
This proves that $(X,g')$ is Legendrian refocusing within $A$ with respect to $p$ and the sequence $(q_n)_{n\geq N}$ for some $N\in \N$.
\end{proof}

\subsection{Generic metrics are not strongly refocusing}\label{subsec:generic metrics}

We continue with the notation of \subsecref{subsec:banach}: $(X,g)$ is a globally hyperbolic spacetime, $e$ is an auxiliary Riemannian metric, $W$ is a weight function, and for any regular closed $A\subseteq X$ the space $\mathcal{M}^k_{(A,g)}$ denotes the Banach manifold of globally hyperbolic metrics agreeing with $g$ on $A$ (and hence agreeing to $k$-th order).

\begin{lem}\label{lem:Geodesics can be steered by changing the metric}
Let $k\geq 3$ and let $(X,g)$ be a globally hyperbolic spacetime with $g$ a $\mathcal{C}^k$ metric. Let $\gamma:[0,1]\to X$ be a non-constant geodesic, $p=\gamma(0)$ and $v=\gamma'(0)$. Let $U\subseteq X$ be a regular open set such that $\gamma^{-1}(U)\subseteq (0,1)$ is an interval, such that $\gamma|_{\gamma^{-1}(U)}:\gamma^{-1}(U)\to U$ is an embedding, and such that $\overline{U}$ is compact. Let $A=X\setminus U$, which is regular closed since $U$ is regular open. Then the map $F: \mathcal{M}_{(A,g)}^k \to X$ given by $g' \mapsto \exp^{g'}_{p}(v)$ is well-defined near $g$ and a submersion at $g$.
\end{lem}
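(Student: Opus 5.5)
Since $\mathcal{M}^k_{(A,g)}$ is an open subset of the affine Banach space $g+\mathcal{Z}^k_A$, it suffices to prove two things. First, $F$ is defined and $\mathcal{C}^1$ on a neighborhood of $g$. Second, its derivative $dF_g:\mathcal{Z}^k_A\to T_{\gamma(1)}X$ is surjective. A surjection onto a finite-dimensional space automatically has a closed, complemented kernel, so this suffices for a submersion.

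\emph{Well-definedness and smoothness.} The Christoffel symbols of $g'$ are $\mathcal{C}^{k-1}$ with $k-1\geq 2$, and they depend continuously affinely on the first derivatives of $g'$. Hence the map $g'\mapsto\Gamma(g')$ is a smooth map from $g+\mathcal{Z}^k_A$ into $\mathcal{C}^{k-1}$ connection coefficients over the compact region we need. By smooth dependence of ODE solutions on parameters, the geodesic flow $g'\mapsto \gamma_{g'}$ with fixed initial data $(p,v)$ exists on $[0,1]$ for $g'$ near $g$. This uses the fact that $\gamma([0,1])$ is compact and that solutions stay in a compact neighbourhood, where the norm $\mathcal{C}^k_W$ controls $\mathcal{C}^k$ norms because $W$ is bounded below on compacts. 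The flow is $\mathcal{C}^1$ as a map into $X$, so $F$ is well defined near $g$ and $\mathcal{C}^1$.

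\emph{Computing the derivative.} Let $g_s=g+su$ with $u\in\mathcal{Z}^k_A$, and let $J(t)=\partial_s|_{s=0}\gamma_{g_s}(t)$. Differentiating the geodesic equation gives the inhomogeneous Jacobi equation
$$J''+R(J,\gamma')\gamma'=-\dot\Gamma(u)(\gamma',\gamma'),\qquad J(0)=0,\ J'(0)=0,$$
where $\dot\Gamma(u)^i_{jk}=\tfrac12 g^{il}(\nabla_j u_{kl}+\nabla_k u_{jl}-\nabla_l u_{jk})$ is a tensor. Let $P(t)$ denote the right-hand side. By variation of constants,
$$dF_g(u)=J(1)=\int_0^1 K(1,t)P(t)\,dt,$$
where $K(1,t):T_{\gamma(t)}X\to T_{\gamma(1)}X$ sends $w$ to the value at time $1$ of the Jacobi field with $J(t)=0$ and $J'(t)=w$. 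Since $u$ vanishes on $A$, the forcing $P$ is supported in the interval $I=\gamma^{-1}(U)$.

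\emph{Surjectivity.} This is the step I expect to be the main obstacle. We have to show that the forcing terms $P(t)$ realisable by admissible $u$ are rich enough. Fix $t_0\in I$. Because $\gamma|_I$ is an embedding into $U$, we may use Fermi-type coordinates along $\gamma$ near $\gamma(t_0)$ and choose $u$ supported in a small ball $B\subset U$ around $\gamma(t_0)$, which is possible because $U$ is open.

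The aim is to prescribe the vector $\dot\Gamma(u)(\gamma',\gamma')$ at $\gamma(t_0)$ arbitrarily. Take $u=\phi\, \ell\otimes_s\ell'$ for suitable covectors $\ell,\ell'$ and a bump function $\phi$. Using the formula, $g(\dot\Gamma(u)(\gamma',\gamma'),\cdot)=\nabla_{\gamma'}u(\gamma',\cdot)-\tfrac12\nabla_\cdot u(\gamma',\gamma')$. Choosing $u$ with $u(\gamma',\gamma')\equiv 0$ near $\gamma$ and $u(\gamma',\cdot)=f(t)\,\omega$ along $\gamma$ for a prescribed covector field $\omega$ gives forcing $f'(t)\omega^\sharp$, so any forcing direction is attainable. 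This works even though $\gamma$ may be null, because $\gamma'\neq0$ and we can pick $\ell$ with $\ell(\gamma')=1$.

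Letting $f'$ approximate a delta at $t_0$, we get $J(1)\approx K(1,t_0)w$ for arbitrary $w$. Concentrating near $t_0$ in this way, one must check that the support of $f'$ integrates correctly; alternatively one can take $f$ itself concentrated and integrate by parts, which produces $\partial_tK$ terms. Either way, the image of $dF_g$ contains $K(1,t_0)w$ and $\partial_tK(1,t_0)w$ up to limits. Since the image is a linear subspace of a finite-dimensional space, it is closed, so limits are fine.

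Finally, $w\mapsto K(1,t_0)w$ together with $\partial_t K(1,t_0)w=-\Phi$ already spans. Indeed, the map $(J(t_0),J'(t_0))\mapsto J(1)$ is onto $T_{\gamma(1)}X$. Moreover, $K(1,t_0)$ and $\partial_t K(1,t_0)$ correspond to the two halves of this initial data, up to the Jacobi equation. Hence the image of $dF_g$ is all of $T_{\gamma(1)}X$, and $F$ is a submersion at $g$.
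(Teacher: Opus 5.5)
Your final spanning step is sound: with $w$ extended parallel, $\partial_tK(1,t_0)w$ is the time-$1$ value of the Jacobi field with $Y(t_0)=-w$, $Y'(t_0)=0$. The step meant to feed it fails, for two reasons.

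\emph{The forcing directions are restricted.} With $u=\phi\,\ell\otimes_s\ell'$ and $\ell(\gamma')=1$ you have $u(\gamma',\gamma')=\phi\,\ell'(\gamma')$. Your requirement $u(\gamma',\gamma')\equiv0$ therefore forces $\omega(\gamma')=0$, so every forcing you produce satisfies $g(P,\gamma')=0$, and ``any forcing direction is attainable'' is false. This costs a full dimension. Since $\gamma''=0$ and $g(R(J,\gamma')\gamma',\gamma')=0$, every solution of the inhomogeneous Jacobi equation satisfies $\frac{d^2}{dt^2}g(J,\gamma')=g(P,\gamma')$. With $J(0)=J'(0)=0$, your perturbations therefore only reach the hyperplane $\gamma'(1)^{\perp}$.

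\emph{The delta approximation is impossible.} Because $u$ vanishes on $A$, $f$ vanishes at both ends of $I=\gamma^{-1}(U)$, so $\int_I f'\,dt=0$ and $f'$ cannot approximate $\delta_{t_0}$. Your forcing $(f\omega)'$ is always a total derivative. After integrating by parts you only obtain terms $\partial_tK(1,t)w$, i.e.\ time-$1$ values of Jacobi fields with $Y'(t)=0$. These need not span even $\gamma'(1)^{\perp}$, for instance if $g$ is flat near $\gamma(I)$ and $\gamma(1)$ is a focal point for such fields. So the claim that the image contains $K(1,t_0)w$ is unsupported, and ``either way'' conflates two different outputs.

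What is missing is exactly the component $u(\gamma',\gamma')$ that you set to zero. In Fermi coordinates along $\gamma$ (with $\gamma'=\partial_t$ and Christoffel symbols vanishing on $\gamma$), your own formula reads $P_\mu=-\partial_t u_{t\mu}+\tfrac12\partial_\mu u_{tt}$ on $\gamma$. Two choices then repair the argument.
\begin{itemize}
\item Let $u_{tt}$ vanish on $\gamma$ but have a prescribed transverse gradient. This gives arbitrary, non-derivative forcing in $\gamma'^{\perp}$. Your $\delta$- and $\delta'$-approximations then yield $K(1,t_0)w$ and $\partial_tK(1,t_0)w$ for all $w\in\gamma'(t_0)^{\perp}$. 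These span $\gamma'(1)^{\perp}$, because the Jacobi field with $Y(1)=z\in\gamma'(1)^{\perp}$, $Y'(1)=0$ has $g(Y,\gamma')\equiv0$.
\item Let $u_{tt}=\phi\geq0$ on $\gamma$ with $\phi\not\equiv0$. Then $g(P,\gamma')=-\tfrac12\phi'$ and $g(J(1),\gamma'(1))=-\tfrac12\int_0^1\phi\,dt\neq0$, which supplies the last direction.
\end{itemize}
These are precisely the two features of the paper's test tensor $h=u\,\omega\otimes\omega$ with $\omega(\gamma')\equiv1$. It prescribes both the value $u=\rho D$ and the transverse derivative $Z(u)=\rho|Z|_e^2$ of $h(\gamma',\gamma')$ along $\gamma$, inside a duality argument against the Jacobi field with $Y(1)=0$, $Y'(1)=\lambda$.

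Once repaired this way, your forward variation-of-constants argument is a valid alternative. It describes the image of $dF_g$ directly, whereas the paper's adjoint argument needs only a single family of test tensors.
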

\begin{proof}
The map $F$ is well-defined and differentiable near $g$ because the flow of the geodesic equation depends differentiably on the metric and its first derivatives. We must show its differential $dF_g: T_g(\mathcal{M}^k_{(A,g)}) \to T_{\gamma(1)}X$ is surjective. Note that $T_g(\mathcal{M}^k_{(A,g)})$ contains all $\mathcal{C}^k$ symmetric $2$-tensors compactly supported in $U$. Let $h \in T_g(\mathcal{M}^k_{(A,g)})$ and consider the variation of metrics $g_s \coloneqq g + s h$. This induces a variation of geodesics $\gamma_s(t) = \exp^{g_s}_p(tv)$. Let $J(t) \coloneqq \left.\frac{\partial}{\partial s}\right|_{s=0} \gamma_s(t)$ be the corresponding variation field along $\gamma$, with initial conditions $J(0) = 0$ and $J'(0) = 0$. Note that $J(1) = dF_g(h)$. Because $\gamma_s$ is a $g_s$-geodesic, differentiating the geodesic equation $\nabla^{g_s}_{\gamma'_s} \gamma'_s = 0$ at $s=0$ yields the inhomogeneous Jacobi equation:
$$ J'' + R(J, \gamma')\gamma' = S_h(t) $$
where $R$ is the Riemann curvature of $g$. By a standard calculation for the variation of the Levi-Civita connection, the term $S_h$ is given by the tensor:
$$ \langle S_h(t), \cdot \rangle = \frac{1}{2} (\nabla h)(\gamma', \gamma', \cdot) - (\nabla_{\gamma'} h)(\gamma', \cdot). $$
To prove $dF_g$ is surjective, we show $(\im dF_g)^\perp = \{0\}$. Let $\lambda \in (\im dF_g)^\perp$. Let $Y(t)$ be the unique Jacobi field along $\gamma$ satisfying $Y(1) = 0$ and $Y'(1) = \lambda$. Taking the inner product of the equation $ J'' + R(J, \gamma')\gamma' = S_h(t) $ with $Y(t)$ and integrating yields:
\begin{align*}
\int_0^1 \langle S_h(t), Y(t) \rangle \, dt &= \int_0^1 \langle J'' + R(J, \gamma')\gamma', Y \rangle \, dt \\
&= \int_0^1 \langle J, Y'' + R(Y, \gamma')\gamma' \rangle \, dt + \Big[ \langle J', Y \rangle - \langle J, Y' \rangle \Big]_0^1 \\
&= \langle J'(1), Y(1) \rangle - \langle J(1), Y'(1) \rangle \\
&= - \langle J(1), \lambda \rangle = 0,
\end{align*}
where in the second equality we integrated by parts twice and used the algebraic symmetry of the Riemann tensor $\langle R(J, \gamma')\gamma', Y \rangle = \langle J, R(Y, \gamma')\gamma' \rangle$, and where the integral in the second line vanishes because $Y$ is a Jacobi field. Noting that $\langle S_h,Y\rangle = \left( \frac{1}{2} (\nabla_{Y} h)(\gamma', \gamma') - (\nabla_{\gamma'} h)(\gamma', Y) \right)$ and applying integration by parts to the term $(\nabla_{\gamma'} h)(\gamma', Y)$ simplifies the integral condition to:
$$ \int_0^1 \left( \frac{1}{2} (\nabla_Y h)(\gamma', \gamma') + h(\gamma', Y') \right) dt = 0. $$
We may smoothly choose a complementary subbundle $E$ such that $T_{\gamma(t)}X = \langle \gamma'(t) \rangle \oplus E_t$. We decompose $Y(t) = c(t)\gamma'(t) + Z(t)$ with $Z(t) \in E_t$. Let $\omega$ be a smooth one-form on $X$ such that along $\gamma^{-1}(U)$ we have $\omega(\gamma') \equiv 1$ and $E_t \subseteq \ker(\omega_{\gamma(t)})$. Let $\beta(t) \coloneqq (\nabla_Y(\omega \otimes \omega))(\gamma', \gamma')$. For an arbitrary smooth function $u: X \to \R$ compactly supported in $U$, we define the tensor $h \coloneqq u \cdot (\omega \otimes \omega)$. Substituting $h$ into our integral condition and integrating the tangential derivative of $u$ by parts yields:
$$ \int_0^1 \left( \frac{1}{2} Z(u) + u(t) \underbrace{\left( \frac{1}{2}\beta(t) + \omega(Y') - \frac{1}{2}c'(t) \right)}_{\coloneqq D(t)} \right) dt = 0. $$

Notice that $D(t)$ does not depend on $u$. Let $\rho \geq 0$ be a bump function supported in $\gamma^{-1}(U)$ and let $I \subseteq \gamma^{-1}(U)$ be an open interval with $\rho|_I > 0$. Write $|Z(t)|_e^2 \coloneqq e(Z(t),Z(t))$ for the Riemannian norm induced by the auxiliary metric $e$ from \subsecref{subsec:banach}. We may take $u$ such that on $\gamma$ we have, $u(t) = \rho(t)D(t)$ and $Z(u)(t) = \rho(t)|Z(t)|_e^2$. Because $Z$ is linearly independent of $\gamma'$ wherever non-zero, this prescription is always possible. The integral becomes
$$ \int_0^1 \rho(t) \left( \frac{1}{2} |Z(t)|_e^2 + D(t)^2 \right) dt = 0. $$

Because the integrand is non-negative and $|Z(t)|_e^2$ vanishes only when $Z(t)=0$, $Z \equiv 0$ and $D \equiv 0$ on $I$. Thus $Y(t) = c(t)\gamma'(t)$ on $I$, which implies $\omega(Y') = c'$ and $\beta \equiv 0$. Substituting these into $D \equiv 0$ yields $c' \equiv 0$, meaning $Y(t) = C\gamma'(t)$ on $I$. Since $Y$ and $C\gamma'$ are both Jacobi fields which agree on $I$ we must have $Y = C\gamma'$ everywhere on $[0,1]$. Since $Y(1) = 0$ and $\gamma'(1) \neq 0$, we must have $C = 0$. Thus $Y \equiv 0$, and in particular $\lambda = Y'(1) = 0$. Therefore $\im(dF_g) = T_{\gamma(1)}X$. Because the target space is finite-dimensional, the surjective linear map $dF_g$ has a split kernel. Hence $F$ is a submersion at $g$.
\end{proof}
\begin{rem}
If one defines the space $\mathcal M^k_{(A,g)}$ for a regular closed subset $A$ of a general semi-Riemannian manifold $(X,g)$ (with any choice of auxiliary Riemannian metric $e$ and weight function $W$), \lemref{lem:Geodesics can be steered by changing the metric} still holds by exactly the same proof.
\end{rem}

\begin{lem}\label{lem:UniversalNullInitialData}
Let $(X,g)$ be a globally hyperbolic spacetime and let $A\subseteq X$ be regular closed. Let $N\geq 1$ and $k\geq 3$, and set
$$T^{(N)}X:=TX\times_X\cdots\times_X TX,$$
where there are $N$ factors. Define
\begin{align*}
\mathcal{D}_N^k
:=
\left\{
(g',p,v_1,\dots,v_N)\in \mathcal{M}_{(A,g)}^k\times T^{(N)}X
\ \middle|\
\begin{array}{l}
v_i\neq 0,\\
g'_p(v_i,v_i)=0 \text{ for all } i,\\
v_i,v_j \text{ lin. ind. for } i\neq j
\end{array}
\right\}.
\end{align*}
Then $\mathcal{D}_N^k$ is a Banach submanifold of
$\mathcal{M}_{(A,g)}^k\times T^{(N)}X$. Moreover, the projection
$$\Pi_N:\mathcal{D}_N^k\to \mathcal{M}_{(A,g)}^k$$
is a Fredholm submersion of index $d+N(d-1)$, where $d=\dim(X)$.
\end{lem}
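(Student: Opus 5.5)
Let $\mathcal{O}\subseteq \mathcal{M}_{(A,g)}^k\times T^{(N)}X$ be the open subset of tuples $(g',p,v_1,\dots,v_N)$ with all $v_i\neq 0$ and $v_i,v_j$ pairwise linearly independent. I will realize $\mathcal{D}_N^k$ as the preimage $\Phi^{-1}(0)$ of the map
$$\Phi:\mathcal{O}\to\R^N,\qquad \Phi(g',p,v_1,\dots,v_N)=\bigl(g'_p(v_1,v_1),\dots,g'_p(v_N,v_N)\bigr),$$
and then apply the implicit function theorem for Banach manifolds. The map $\Phi$ is $\mathcal{C}^1$ (indeed smooth in $g'$, since it is affine in $g'$ composed with evaluation, and $\mathcal{C}^k$ in $(p,v)$ because $g'$ is $\mathcal{C}^k$ with $k\geq 3$). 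Its target is finite-dimensional, so once $d\Phi$ is shown to be surjective its kernel is automatically split, and $\mathcal{D}_N^k$ is a Banach submanifold of codimension $N$.

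For surjectivity I will not vary the metric at all. The point is that $\mathcal{M}^k_{(A,g)}$ consists of metrics frozen on $A$, so for $p\in A$ metric variations are useless. Instead I vary the fibre coordinates. Fix $i$ and the variation $\dot v_i=v_i$, with all other components held fixed. This gives
$$d\Phi(0,0,\dots,v_i,\dots,0)=2g'_p(v_i,v_i)\,e_i=0,$$
so that choice fails. Instead choose $w_i\in T_pX$ with $g'_p(v_i,w_i)\neq 0$, which is possible since $g'_p$ is nondegenerate and $v_i\neq 0$. Varying only $v_i$ in the direction $w_i$ gives
$$d\Phi(0,0,\dots,w_i,\dots,0)=2g'_p(v_i,w_i)\,e_i,$$
a nonzero multiple of the $i$-th basis vector. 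Hence $d\Phi$ is surjective at every point of $\Phi^{-1}(0)$, and this holds regardless of $A$.

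For the projection $\Pi_N$, I consider its differential at a point of $\mathcal{D}_N^k$. The tangent space is
$$T\mathcal{D}_N^k=\ker d\Phi=\bigl\{(h,\xi,\dot v_1,\dots,\dot v_N)\;\bigm|\;h_p(v_i,v_i)+\partial_\xi\bigl(g'(v_i,v_i)\bigr)+2g'_p(v_i,\dot v_i)=0\bigr\},$$
and $d\Pi_N$ maps $(h,\xi,\dot v)\mapsto h$. It is surjective: given $h$, take $\xi=0$ and solve each constraint for $\dot v_i$ by adding a suitable multiple of $w_i$. Its kernel is the set of $(0,\xi,\dot v)$ satisfying the $N$ linearized constraints on $T_{(p,v)}T^{(N)}X$, which has dimension $d+Nd$. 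These $N$ constraints are independent by the same $w_i$ argument, so the kernel has dimension $d+Nd-N=d+N(d-1)$. Thus $\Pi_N$ is a submersion whose kernel is finite-dimensional and whose cokernel is zero, i.e.\ Fredholm of index $d+N(d-1)$.

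The main subtlety is the meaning of $\partial_\xi$ in a chart. I will work in local coordinates on $TX$, where the term $\partial_\xi(g'(v_i,v_i))$ is just a derivative of a $\mathcal{C}^k$ function. Because the argument only uses the fibre directions, the choice of connection is irrelevant.
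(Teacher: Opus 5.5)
Your proposal is correct and follows essentially the same route as the paper: you realize $\mathcal{D}_N^k$ as a regular level set of $\Phi$ using only fibre variations $w_i$ of the $v_i$, and you get surjectivity of $d\Pi_N$ by solving each linearized constraint for $\dot v_i$. Your index count, computing $\ker d\Pi_N$ directly as a subspace of dimension $d+Nd-N$, is a slightly more precise version of the paper's count of the fibre dimension $\dim \Pi_N^{-1}(g')=d+N(d-1)$.
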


\begin{proof}
Let
$$\Phi:\mathcal{M}_{(A,g)}^k\times T^{(N)}X\to \mathbb{R}^N$$
be defined by
\begin{align*}
\Phi(g',p,v_1,\dots,v_N)
=
\big(g'_p(v_1,v_1),\dots,g'_p(v_N,v_N)\big).
\end{align*}
The nonzero and pairwise linear-independence conditions in the definition of $\mathcal{D}_N^k$ are open conditions, so it suffices to show that $\Phi$ is a submersion along the relevant part of $\Phi^{-1}(0)$. Let $(g',p,v_1,\dots,v_N)\in \mathcal{D}_N^k$. Since $g'_p$ is nondegenerate
and $v_i\neq 0$, the covector $g'_p(v_i,\cdot)$ is nonzero. Hence, for any $(a_1,\dots,a_N)\in \mathbb{R}^N$, we can choose $\eta_i\in T_pX$ such that
$$
2g'_p(v_i,\eta_i)=a_i.
$$
Then
\begin{align*}
d\Phi_{(g',p,v_1,\dots,v_N)}(0,0,\eta_1,\dots,\eta_N)
=
(a_1,\dots,a_N).
\end{align*}
Thus $\Phi$ is a submersion along $\mathcal{D}_N^k$, and consequently
$\mathcal{D}_N^k$ is a Banach submanifold. We now show that $\Pi_N$ is a submersion. Let $h\in T_{g'}\mathcal{M}_{(A,g)}^k$. Since each covector $g'_p(v_i,\cdot)$ is nonzero, we can choose $\eta_i\in T_pX$ such that
$$
h_p(v_i,v_i)+2g'_p(v_i,\eta_i)=0.
$$
This means that $(h,0,\eta_1,\dots,\eta_N)$ is tangent to $\mathcal{D}_N^k$ at $(g',p,v_1,\dots,v_N)$. Therefore $d\Pi_N(h,0,\eta_1,\dots,\eta_N)=h$, and so $\Pi_N$ is a submersion. For fixed $g'$, the fiber $\Pi_N^{-1}(g')$ is the space of tuples $(p,v_1,\dots,v_N)$ with $v_i\in T_pX$ nonzero and $g'_p$-null, subject to the open condition that the $v_i$ are pairwise linearly independent. For each $p\in X$, the nonzero null cone in $T_pX$ has dimension $d-1$. Hence
$$
\dim\big(\Pi_N^{-1}(g')\big)=d+N(d-1).
$$
Thus $\Pi_N$ is a Fredholm submersion of index $d+N(d-1)$.
\end{proof}

\begin{prop}\label{prop: getting a residual set of an M^infinity from residual sets of M^k}
Let $k_0\in \N$. For every $k\geq k_0$, let $\mathcal{M}^k$ be a separable Banach manifold, and let $\mathcal{M}^{\infty}$ be a Fr\'echet manifold together with continuous injective maps $\iota_k:\mathcal{M}^{\infty}\hookrightarrow \mathcal{M}^k$ with dense images. Assume that the maps $\iota_k$ arise from a directed inverse system, i.e. that for all $\ell\geq k\geq k_0$ there are continuous maps
$$\rho_{k,\ell}:\mathcal{M}^{\ell}\to \mathcal{M}^k$$
such that
$$\iota_k=\rho_{k,\ell}\circ \iota_{\ell},$$
and assume that the topology on $\mathcal{M}^{\infty}$ is the corresponding inverse limit topology. For every $k\geq k_0$, let $\mathcal{P}^k$ be a separable Banach manifold and let $\pi^k:\mathcal{P}^k\to \mathcal{M}^k$ be a $\mathcal{C}^1$ Fredholm map of index at most $-1$. Assume that, for every $k\geq k_0$, there are subsets $(\mathcal{P}^k_j)_{j\in \N}$ of $\mathcal{P}^k$ such that $\mathcal{P}^k=\bigcup_{j=1}^{\infty}\mathcal{P}^k_j,$ such that $\pi^k(\mathcal{P}^k_j)$ is closed in $\mathcal{M}^k$ for all $j\in \N$ and $k\geq k_0$, and such that for all $j\in \N$ and all $k,\ell\geq k_0$ we have
$$\iota_k^{-1}\big(\pi^k(\mathcal{P}^k_j)\big) = \iota_\ell^{-1}\big(\pi^\ell(\mathcal{P}^\ell_j)\big).$$
Let $j\in\N$. We define $\mathcal{O}^k_j\coloneqq \mathcal{M}^k\setminus \pi^k(\mathcal{P}^k_j),$ for all $k\geq k_0$ and we define $\mathcal{O}^{\infty}_j
\coloneqq
\mathcal{M}^{\infty}\setminus \iota_k^{-1}\big(\pi^k(\mathcal{P}^k_j)\big),$ where $k\geq k_0$ is arbitrary. This is well-defined by the compatibility assumption.  Then each $\mathcal{O}^k_j$ is open and dense in $\mathcal{M}^k$, and $\mathcal{O}_j^{\infty}$ is open and dense in $\mathcal{M}^{\infty}$. In particular, $\mathcal{R}^k\coloneqq \bigcap_{j=1}^{\infty}\mathcal{O}^k_j$ is residual in $\mathcal{M}^k$, and $\mathcal{R}^{\infty}\coloneqq \bigcap_{j=1}^{\infty}\mathcal{O}^{\infty}_j$ is residual in $\mathcal{M}^{\infty}$. Note that by construction
$$\mathcal{R}^k\cap \pi^k(\mathcal{P}^k)=\emptyset, \quad \iota_k(\mathcal{R}^{\infty})\cap \pi^k(\mathcal{P}^k)=\emptyset$$
for every $k\geq k_0$.
\end{prop}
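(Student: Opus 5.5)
The plan is to prove the Banach statement first with the Sard--Smale theorem, and then transfer it to $\mathcal{M}^{\infty}$ using density of the images $\iota_k(\mathcal{M}^{\infty})$ and the structure of the inverse limit topology.

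\emph{Banach level.} Fix $k\geq k_0$ and $j\in\N$. The set $\mathcal{O}^k_j$ is open because its complement $\pi^k(\mathcal{P}^k_j)$ is closed by hypothesis. For density, I apply the Sard--Smale theorem to $\pi^k$. Its regularity requirement is that $\pi^k$ be $\mathcal{C}^q$ with $q>\max(0,\operatorname{ind}\pi^k)$. Since $\operatorname{ind}\pi^k\leq -1$, we have $\max(0,\operatorname{ind}\pi^k)=0$, so $\mathcal{C}^1$ suffices. Sard--Smale then says the regular values of $\pi^k$ form a residual subset of $\mathcal{M}^k$.

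A point $m\in\pi^k(\mathcal{P}^k)$ cannot be a regular value. At any $x\in(\pi^k)^{-1}(m)$ a surjective differential would have $\dim\ker d\pi^k_x=\operatorname{ind}\pi^k<0$, which is absurd. Hence the set of regular values equals $\mathcal{M}^k\setminus\pi^k(\mathcal{P}^k)$, and it is residual. Because $\mathcal{M}^k$ is a Banach manifold, the Baire property makes this set dense. Since $\mathcal{O}^k_j\supseteq\mathcal{M}^k\setminus\pi^k(\mathcal{P}^k)$, each $\mathcal{O}^k_j$ is dense. Therefore $\mathcal{R}^k=\bigcap_j\mathcal{O}^k_j$ is residual.

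\emph{Fr\'echet level.} Openness is immediate, since $\mathcal{O}^{\infty}_j=\iota_k^{-1}(\mathcal{O}^k_j)$ and $\iota_k$ is continuous.

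For density I use the structure of the inverse limit topology. It is generated by the sets $\iota_\ell^{-1}(V)$ with $V\subseteq\mathcal{M}^\ell$ open. Because the system is directed and $\iota_k=\rho_{k,\ell}\circ\iota_\ell$, a finite intersection $\bigcap_i\iota_{k_i}^{-1}(V_i)$ can be rewritten as $\iota_\ell^{-1}\bigl(\bigcap_i\rho_{k_i,\ell}^{-1}(V_i)\bigr)$ for any $\ell\geq\max_i k_i$. So every nonempty open subset of $\mathcal{M}^{\infty}$ contains a nonempty set of the form $\iota_\ell^{-1}(V)$.

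Take such a set, with $V$ open. It is nonempty, so $V\neq\emptyset$. By the Banach step, $V\cap\mathcal{O}^\ell_j$ is nonempty and open. Since $\iota_\ell$ has dense image, there is $x\in\mathcal{M}^{\infty}$ with $\iota_\ell(x)\in V\cap\mathcal{O}^\ell_j$. By the compatibility assumption, $\mathcal{O}^{\infty}_j$ may be computed using the index $\ell$, so $x\in\iota_\ell^{-1}(V)\cap\mathcal{O}^{\infty}_j$. This proves density. Consequently $\mathcal{R}^{\infty}$ is a countable intersection of open dense sets, i.e.\ residual.

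\emph{Disjointness.} The final claims follow directly from the definitions. First, $\mathcal{R}^k$ avoids every $\pi^k(\mathcal{P}^k_j)$, and $\pi^k(\mathcal{P}^k)=\bigcup_j\pi^k(\mathcal{P}^k_j)$. Second, for $x\in\mathcal{R}^{\infty}$ and any fixed $k$, compatibility lets us compute each $\mathcal{O}^{\infty}_j$ with that same $k$. This gives $\iota_k(x)\notin\pi^k(\mathcal{P}^k_j)$ for all $j$, hence $\iota_k(x)\notin\pi^k(\mathcal{P}^k)$.

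The only genuinely delicate point is the density transfer to $\mathcal{M}^{\infty}$. It relies on reducing basic open sets of the inverse limit topology to a single level $\ell$, and on the compatibility hypothesis, which lets $\mathcal{O}^\infty_j$ be computed at that same level $\ell$.
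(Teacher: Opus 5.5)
Your proposal is correct and follows the paper's proof essentially step for step. You use Sard--Smale, with index $\leq -1$ forcing regular values to avoid $\pi^k(\mathcal{P}^k)$; you get openness of $\mathcal{O}^k_j$ from closedness of $\pi^k(\mathcal{P}^k_j)$; and you get density in $\mathcal{M}^{\infty}$ by shrinking to a single-level set $\iota_\ell^{-1}(V)$ and then using dense images together with the compatibility assumption. Your explicit reduction of finite intersections of basic open sets to one level via $\rho_{k,\ell}$, and your kernel-dimension argument for why image points are not regular values, are just slightly more detailed versions of the paper's steps, not a different route.
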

\begin{proof}
We first prove the statements about $k\geq k_0$. Let $k\geq k_0$. By the Sard--Smale transversality theorem \cite{Smale1965}, the set of regular values of $\pi^k$ is residual in $\mathcal M^k$. Since $\operatorname{ind}(\pi^k)\leq -1$, $\mathcal C^1$ regularity is sufficient for this application of Sard--Smale. Let $y\in \mathcal{M}^k$ be a regular value of $\pi^k$. If $(\pi^k)^{-1}(y)$ were non-empty, then by the regular level set theorem for Fredholm maps it would be a smooth manifold of dimension $\ind(\pi^k)\leq -1$, which is impossible. Hence every regular value of $\pi^k$ has empty preimage. Thus
$$\{\text{regular values of }\pi^k\}\subseteq \mathcal{M}^k\setminus \pi^k(\mathcal{P}^k).$$
In particular, for every $j\in \N$,
$$\{\text{regular values of }\pi^k\}\subseteq \mathcal{O}^k_j.$$
Since the regular values are dense, $\mathcal{O}^k_j$ is dense in $\mathcal{M}^k$. The set $\mathcal{O}^k_j$ is open because $\pi^k(\mathcal{P}^k_j)$ is closed in $\mathcal{M}^k$. Therefore $\mathcal{R}^k=\bigcap_{j=1}^{\infty}\mathcal{O}^k_j$ is residual in $\mathcal{M}^k$. If $y\in \mathcal{R}^k\cap \pi^k(\mathcal{P}^k)$, then there exists $x\in \mathcal{P}^k$ with $\pi^k(x)=y$. Since $\mathcal{P}^k=\bigcup_j\mathcal{P}^k_j$, there exists $j\in\N$ such that $x\in \mathcal{P}^k_j$. Hence $y\in \pi^k(\mathcal{P}^k_j)$, contradicting $y\in\mathcal{O}^k_j$. Thus
$$\mathcal{R}^k\cap \pi^k(\mathcal{P}^k)=\emptyset.$$
We now prove the statements regarding $\mathcal{M}^{\infty}$. The set $\mathcal{O}^{\infty}_j$ is well-defined by the compatibility assumption. To see that it is open, take any $k\geq k_0$. Clearly $\mathcal{O}^{\infty}_j=\iota_k^{-1}(\mathcal{O}^k_j)$ is open in $\mathcal{M}^{\infty}$. It remains to prove density. Let $\mathcal{V}\subseteq \mathcal{M}^{\infty}$ be a non-empty open set. Choose $x\in \mathcal{V}$. By the inverse limit topology assumption, there exist $k\geq k_0$ and an open set $\mathcal{U}\subseteq \mathcal{M}^k$ such that
$$\iota_k(x)\in \mathcal{U}
\quad\text{and}\quad
\iota_k^{-1}(\mathcal{U})\subseteq \mathcal{V}.$$
Since $\mathcal{O}^k_j$ is open and dense in $\mathcal{M}^k$, the set $\mathcal{U}\cap \mathcal{O}^k_j$ is non-empty and open in $\mathcal{M}^k$. Since $\iota_k(\mathcal{M}^{\infty})$ is dense in $\mathcal{M}^k$, there exists $z\in \mathcal{M}^{\infty}$ such that $\iota_k(z)\in \mathcal{U}\cap \mathcal{O}^k_j.$ Then $z\in \iota_k^{-1}(\mathcal{U})\subseteq \mathcal{V}$, and also $z\in \iota_k^{-1}(\mathcal{O}^k_j)=\mathcal{O}^{\infty}_j.$ Thus $\mathcal{V}\cap \mathcal{O}^{\infty}_j\neq \emptyset.$ Since $\mathcal{V}$ was arbitrary, $\mathcal{O}^{\infty}_j$ is dense in $\mathcal{M}^{\infty}$. Therefore $\mathcal{R}^{\infty}=\bigcap_{j=1}^{\infty}\mathcal{O}^{\infty}_j$ is residual in $\mathcal{M}^{\infty}$. Finally, suppose that $z\in \mathcal{R}^{\infty}$ and that for some $k\geq k_0$ we have $\iota_k(z)\in \pi^k(\mathcal{P}^k).$ Then $\iota_k(z)=\pi^k(x)$ for some $x\in \mathcal{P}^k$. Choose $j\in\N$ with $x\in \mathcal{P}^k_j$. Then $z\in \iota_k^{-1}\big(\pi^k(\mathcal{P}^k_j)\big),$ so $z\notin \mathcal{O}^{\infty}_j$, contradicting $z\in \mathcal{R}^{\infty}$. Hence
$$\iota_k(\mathcal{R}^{\infty})\cap \pi^k(\mathcal{P}^k)=\emptyset$$
for every $k\geq k_0$.
\end{proof}

\begin{thm}\label{thm: For N large enough generic spacetimes are not N-refocusing}
Let $(X,g)$ be a globally hyperbolic spacetime and let $A\subseteq X$ be a regular closed set. Let $d=\dim(X)$ and let $N\geq 2d+1$. For $k\in\N_{\geq 3}\cup\{\infty\}$ there exists a residual set
$$\mathcal{R}^k \subseteq \mathcal{M}_{(A,g)}^k$$
such that no metric $g'\in \mathcal{R}^k$ is $N$-fold null-geodesically connecting through $X\setminus A$. 
\end{thm}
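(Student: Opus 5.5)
We would build, for each $k\geq 3$, a Banach manifold $\mathcal{P}^k$ of ``$N$-fold connections through $X\setminus A$'', show that its projection to $\mathcal{M}^k_{(A,g)}$ is Fredholm of negative index, and then apply \propref{prop: getting a residual set of an M^infinity from residual sets of M^k}. Concretely, we start from $\mathcal{D}_N^k$ of \lemref{lem:UniversalNullInitialData} and enlarge it by $N$ positive times $(t_1,\dots,t_N)$. We then consider the evaluation map
$$\mathrm{ev}:\widetilde{\mathcal{D}}^k_N\to X^N,\qquad (g',p,v_1,\dots,v_N,t_1,\dots,t_N)\mapsto \big(\exp^{g'}_p(t_1v_1),\dots,\exp^{g'}_p(t_Nv_N)\big),$$
restricted to the open set where each geodesic $s\mapsto\exp^{g'}_p(sv_i)$, $s\in[0,t_i]$, meets the open set $X\setminus A$ and has endpoint different from $p$. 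Since null geodesics are defined only up to affine reparametrisation, we normalise (e.g.\ $e(v_i,v_i)=1$, or drop the $t_i$ and impose $\exp^{g'}_p(v_i)$ on the diagonal). Distinct geodesics correspond to pairwise linearly independent $v_i$; the condition that the same geodesic does not return to $q$ twice is automatic by causality. We set $\mathcal{P}^k=\mathrm{ev}^{-1}(\Delta)$, where $\Delta\subset X^N$ is the small diagonal of codimension $d(N-1)$. A metric $g'$ is $N$-fold connecting through $X\setminus A$ exactly when $g'\in\pi^k(\mathcal{P}^k)$.

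The key step is transversality of $\mathrm{ev}$ to $\Delta$, and we expect this to be the main obstacle. Each geodesic segment $\gamma_i$ meets $X\setminus A$, so we can choose a small regular open set $U_i\subseteq X\setminus A$ around a point of $\gamma_i$ where $\gamma_i$ is embedded and $\gamma_i^{-1}(U_i)$ is an interval. The difficulty is that the $U_i$ must be disjoint from the other geodesics $\gamma_j$. Distinct null geodesics from $p$ meet only at isolated parameters, and each $\gamma_i$ meets the open set $X\setminus A$ on an open parameter interval, so we can pick the $U_i$ pairwise disjoint and avoiding the other $\gamma_j$. One subtlety remains: $\gamma_i$ could coincide with $\gamma_j$ near $X\setminus A$ only if they are the same geodesic, which is excluded by linear independence. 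By \lemref{lem:Geodesics can be steered by changing the metric}, perturbations supported in $U_i$ move the endpoint of $\gamma_i$ freely in $T_{q}X$ without affecting the other $\gamma_j$. Hence $d\,\mathrm{ev}$ is surjective onto $T(X^N)$, and $\mathcal{P}^k$ is a Banach submanifold.

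Next we compute the index. Variations of the $t_i$ are absorbed by the normalisation, so $\pi^k:\mathcal{P}^k\to\mathcal{M}^k_{(A,g)}$ is Fredholm of index
$$d+N(d-1)-d(N-1)=2d-N\leq -1 \quad\text{for } N\geq 2d+1.$$
It is $\mathcal{C}^1$ as long as $k\geq 3$.

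Finally, we exhaust $\mathcal{P}^k$ by the sets $\mathcal{P}^k_j$ on which three conditions hold: $p$ lies in a fixed compact set $K_j$; the $v_i$ (normalised) have pairwise angles, distances of $q$ from $p$, and lengths of the portion inside $X\setminus A$ bounded below by $1/j$; and the curve stays in a compact set, which is automatic by global hyperbolicity, since the segments lie in $J^+(p)\cap J^-(q)$. For these sets we show that $\pi^k(\mathcal{P}^k_j)$ is closed. Given $g'_m\to g'$ with data in $\mathcal{P}^k_j$, we extract a convergent subsequence by compactness of the causal diamonds and of the normalised directions; the limit data lie in $\mathcal{P}^k_j$ because the uniform bounds prevent degeneration (e.g.\ geodesics merging, or $q$ collapsing to $p$). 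For this we may need to replace ``meets $X\setminus A$'' by ``meets a closed $1/j$-shrinking of $X\setminus A$''. The defining conditions of $\mathcal{P}^k_j$ do not depend on $k$, so the compatibility condition $\iota_k^{-1}(\pi^k(\mathcal{P}^k_j))=\iota_\ell^{-1}(\pi^\ell(\mathcal{P}^\ell_j))$ holds. The density of $\mathcal{M}^\infty_{(A,g)}$ in $\mathcal{M}^k_{(A,g)}$ follows by mollification in $\mathcal{Z}^k_A$. \propref{prop: getting a residual set of an M^infinity from residual sets of M^k} then yields the residual sets $\mathcal{R}^k$, for $k\geq 3$ and for $k=\infty$.
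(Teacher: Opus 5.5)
Your architecture matches the paper's. You use the universal space $\mathcal{D}^k_N$; your extra $t_i$ with the normalisation $e(v_i,v_i)=1$ are harmless and give the same index $2d-N$. You make the evaluation map to $X^N$ transverse to the small diagonal via \lemref{lem:Geodesics can be steered by changing the metric}, applied on disjoint regular open $U_i\subseteq X\setminus A$ that avoid $p$ and the other rays. You then feed an exhaustion with closed images into \propref{prop: getting a residual set of an M^infinity from residual sets of M^k}. The gap is the closedness of $\pi^k(\mathcal{P}^k_j)$. That closedness is what makes $\mathcal{O}^k_j$ open, so the argument cannot do without it.

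\textbf{Where closedness fails.} Your sets $\mathcal{P}^k_j$ put $p$ in $K_j$ and bound angles, $d(p,q)$ and the portion in $X\setminus A$ from below. Nothing bounds the meeting point $q$, the parameters $t_i$, or the velocities from above. Compactness of causal diamonds does not supply this. Each single segment trivially lies in a compact set (its own image). But for a sequence $g'_m\to g'$ the diamonds $J^+_{g'_m}(p_m)\cap J^-_{g'_m}(q_m)$ vary with $m$ through $q_m$ and through the metric. They lie in a common compact set only if the $q_m$ do. If $q_m$ escapes to infinity, the limit metric may have no $N$-fold connection at all. For example, let $N$ null rays from $p$ leave a lens in $X\setminus A$ as parallel generators of a null hyperplane in a flat region. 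Compactly supported lens perturbations of size $O(1/R)$ then make them concurrent at distance about $R$, and the concurrence disappears in the limit. So $\pi^k(\mathcal{P}^k_j)$ need not be closed.

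\textbf{The repair is the paper's.} In the definition of $\mathcal{P}^k_j$, require:
\begin{itemize}
\item the lifted segments $t\mapsto(\gamma_i(t),\gamma_i'(t))$ lie in the $j$-th member $H_j$ of a compact exhaustion of $TX$, and with your parametrisation also $t_i\le j$;
\item each segment meets the $j$-th member $L_j$ of a compact exhaustion of $X\setminus A$. Your closed $1/j$-shrinking becomes exactly this once positions are bounded.
\end{itemize}
This does not hurt the exhaustion property, since a point of $\mathcal{P}^k$ involves only finitely many compact segments. It also ensures that the limit geodesics exist on the whole parameter interval, share an endpoint, and still meet $X\setminus A$.

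A minor point on density of $\mathcal{M}^\infty_{(A,g)}$ in $\mathcal{M}^k_{(A,g)}$: plain mollification destroys vanishing on $A$. First cut off near $A$, using that elements of $\mathcal{Z}^k_A$ vanish to order $k$ there, and then mollify at a smaller scale.
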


\begin{proof}
Let $k\geq 3$. Let $\mathcal{D}_N^k$ be as in \lemref{lem:UniversalNullInitialData}. Let $\mathcal{Q}^k\subseteq \mathcal{D}_N^k$ be the open set of points $(g',p,v_1,\dots,v_N)$ for which every null-geodesic segment $t\mapsto \exp^{g'}_p(tv_i)$, $t\in [0,1]$, is well-defined and intersects $X\setminus A$. We define the evaluation map 
$$E_k:\mathcal{Q}^k \to X^N,\quad E_k(g',p,v_1,\dots,v_N)=(\exp_p^{g'}(v_1),\dots,\exp_p^{g'}(v_N)).$$
Let $\Delta\subseteq X^N$ denote the diagonal. Since $\dim(\Delta)=\dim(X)=d$, we have $\codim(\Delta)=\dim(X^N) - \dim(\Delta) = dN - d = d(N-1)$. Let $\mathcal{P}^k\coloneqq E_k^{-1}(\Delta)$. Let $(g', \mathbf{x}) \in \mathcal{P}^k$, where $\mathbf{x}=(p,v_1,\dots,v_N)$, and write
$$q:=\exp_p^{g'}(v_1)=\cdots=\exp_p^{g'}(v_N).$$
Because the $N$ initial null directions $v_i$ are pairwise linearly independent, the resulting null geodesics $\gamma_i(t)=\exp_p^{g'}(tv_i)$ are distinct. Thus, their intersection sets are discrete, and we can choose $N$ mutually disjoint regular open sets $U_1, \dots, U_N \subseteq X \setminus A$, with compact closures contained in $X\setminus A$, such that $U_i$ intersects $\gamma_i$ away from its endpoints and avoids all other rays. Since $(X,g')$ is globally hyperbolic hence strongly causal, we can pick each $U_i$ small enough to ensure that $\gamma_i^{-1}(U_i)$ is an interval. Because the perturbations in each $U_i$ can be chosen independently, and because each $U_i$ is disjoint from $p$, \lemref{lem:Geodesics can be steered by changing the metric} applied to $g'$ as the base metric implies that the differential of $E_k$ is surjective onto $T_{(q,\dots,q)} X^N$. The condition that each $U_i$ is disjoint from $p$ ensures that these perturbations do not change the equations $g'_p(v_i,v_i)=0$, hence they define tangent vectors to $\mathcal{D}_N^k$. Therefore, $E_k$ is a submersion at $(g', \mathbf{x})$, so $E_k \pitchfork \Delta$. Hence $\mathcal{P}^k$ is a Banach submanifold of $\mathcal{Q}^k$. Further $\codim(\mathcal{P}^k) = \codim(\Delta) =d(N-1)$. Let $\Pi^k: \mathcal{D}_N^k \to \mathcal{M}_{(A,g)}^k$ be the projection onto the first factor. By \lemref{lem:UniversalNullInitialData}, $\Pi^k$ is a Fredholm map with $\ind(\Pi^k)=d+N(d-1)$. Let $\jmath_k: \mathcal{P}^k \hookrightarrow \mathcal{Q}^k$ denote the inclusion map. The differential $d\jmath_k$ is everywhere injective, so $\dim(\ker d\jmath_k) = 0$. The cokernel of $d\jmath_k$ has dimension $\codim(\mathcal{P}^k)$. Thus, $\jmath_k$ is a Fredholm map with $\ind(\jmath_k) = -\codim(\mathcal{P}^k)$. The restricted projection map is given by the composition $\pi^k = \Pi^k \circ \jmath_k: \mathcal{P}^k \to \mathcal{M}_{(A,g)}^k$. By the additivity of the Fredholm index under composition, $\pi^k$ is Fredholm, and its index is:
\begin{align*}
\ind(\pi^k) &= \ind(\Pi^k) + \ind(\jmath_k)\\
&= [d+ N(d-1)] - \codim(\mathcal{P}^k)\\
&= [d+ N(d-1)] - d(N-1)\\
&= d + dN - N - dN + d\\
&= 2d - N\\
&\leq -1.
\end{align*}
Next we show that we can find subsets $(\mathcal{P}_j^k)_{j\in \N}$ of $\mathcal{P}^k$ such that $\mathcal{P}^k = \bigcup_{j\in \N} \mathcal{P}^k_j$, such that each $\pi^k(\mathcal{P}_j^k)\subseteq \mathcal{M}^k_{(A,g)}$ is closed and such that $\pi^{k_2}(\mathcal{P}^{k_2}_j) = \pi^{k_1}(\mathcal{P}^{k_1}_j) \cap \mathcal{M}^{k_2}_{(A,g)}$ for all $k_1,k_2 \in \N_{\geq 3}$ with $k_1<k_2$. Let $\mathcal{Y}\subseteq T^{(N)}X$ denote the open subset consisting of tuples $(p,v_1,\dots,v_N)$ with all $v_i$ non-zero and pairwise linearly independent. Choose exhaustions by compact sets
$$B_1\subseteq B_2\subseteq \cdots \subseteq \mathcal{Y},
\qquad
H_1\subseteq H_2\subseteq \cdots \subseteq TX,
\qquad
L_1\subseteq L_2\subseteq \cdots \subseteq X\setminus A$$
with $\bigcup_j B_j=\mathcal{Y}$, $\bigcup_j H_j=TX$, and $\bigcup_j L_j=X\setminus A$ and with the property that every compact subset of the relevant space is contained in some member of the exhaustion. For $j\in\N$, let $\mathcal{P}_j^k\subseteq \mathcal{P}^k$ be the subset consisting of points $(g',p,v_1,\dots,v_N)$ such that $(p,v_1,\dots,v_N)\in B_j$, such that the lifted geodesic segments
$$
t\mapsto \left(\exp_p^{g'}(tv_i),\frac{d}{dt}\exp_p^{g'}(tv_i)\right)
$$
are contained in $H_j$ for all $i=1,\dots,N$, and such that each geodesic segment $t\mapsto \exp_p^{g'}(tv_i)$ intersects $L_j$. The sets $\mathcal{P}_j^k$ exhaust $\mathcal{P}^k$. Indeed, for any point $(g',p,v_1,\dots,v_N)\in \mathcal{P}^k$, the initial data $(p,v_1,\dots,v_N)$ lie in some $B_j$, the finitely many lifted geodesic segments are compact subsets of $TX$ and hence lie in some $H_j$, and, since each segment intersects $X\setminus A$, the finitely many intersections are detected in some $L_j$. We claim that $\pi^k(\mathcal{P}_j^k)$ is closed in $\mathcal{M}^k_{(A,g)}$. Let $(g'_m)_{m\in\N}$ be a sequence in $\pi^k(\mathcal{P}_j^k)$ converging to $g'$ in $\mathcal{M}^k_{(A,g)}$. Choose corresponding initial data $(p_m,v_{1,m},\dots,v_{N,m})\in B_j$. Since $B_j$ is compact, after passing to a subsequence we may assume that
$$(p_m,v_{1,m},\dots,v_{N,m})\to (p,v_1,\dots,v_N)\in B_j.$$
By continuous dependence of geodesics on the metric and initial data, and since the lifted geodesic segments are contained in the compact set $H_j$, the limiting $g'$-geodesic segments
$$
\gamma_i(t)=\exp_p^{g'}(tv_i)
$$
are defined on $[0,1]$, their lifted images are contained in $H_j$, and
$$\gamma_1(1)=\cdots=\gamma_N(1).$$
Moreover, since each approximating segment intersects the compact set $L_j$, after passing to subsequences in the parameters $t\in[0,1]$ each limiting segment also intersects $L_j$. Finally, the null conditions $g'_p(v_i,v_i)=0$ follow by taking limits of the equations $g'_{m,p_m}(v_{i,m},v_{i,m})=0$. Thus $(g',p,v_1,\dots,v_N)\in \mathcal{P}_j^k$, so $g'\in \pi^k(\mathcal{P}_j^k)$. Hence $\pi^k(\mathcal{P}_j^k)$ is closed. It is easy to see that  $\pi^{k_2}(\mathcal{P}^{k_2}_j) = \pi^{k_1}(\mathcal{P}^{k_1}_j) \cap \mathcal{M}^{k_2}_{(A,g)}$ for all $k_1,k_2 \in \N_{\geq 3}$ with $k_1<k_2$. Finally, note that $\iota_k(\mathcal{M}^{\infty}_{(A,g)})\subseteq \mathcal{M}^k_{(A,g)}$ is dense for all $k\in \N_{\geq 3}$. We now apply \propref{prop: getting a residual set of an M^infinity from residual sets of M^k} to the sequence of Banach manifolds $(\mathcal{M}^k_{(A,g)})_{k\geq 3}$ and the Fr\'echet manifold $\mathcal{M}^{\infty}_{(A,g)}$ with the natural inclusions $\rho_{k_1,k_2}:\mathcal{M}^{k_2}_{(A,g)}\to \mathcal{M}^{k_1}_{(A,g)}$ and $\iota_k:\mathcal{M}^{\infty}_{(A,g)}\to \mathcal{M}^k_{(A,g)}$. As a result we obtain residual sets $\mathcal{R}^k\subseteq \mathcal{M}^k_{(A,g)}$ with $\mathcal{R}^k \cap \pi^k(\mathcal{P}^k) = \emptyset$ and a residual set $\mathcal R^\infty\subseteq \mathcal M^\infty_{(A,g)}$ such that
$$\iota_k(\mathcal R^\infty)\cap \pi^k(\mathcal P^k)=\emptyset$$
for every $k\geq 3$. In particular, no metric in any of the residual sets $\mathcal{R}^k$, $k\in\N_{\geq 3}\cup\{\infty\}$, is $N$-fold null-geodesically connecting through $X\setminus A$.
\end{proof}

\begin{cor}\label{cor: Avoiding refocusing in A means we can avoid refocusing in X}
Let $(X,g)$ be a globally hyperbolic spacetime of dimension at least $3$ and let $A\subseteq X$ be regular closed such that $g$ is not strongly refocusing within $A$. Then there exists a residual set $\mathcal{R}\subseteq \mathcal{M}^{\infty}_{(A,g)}$ such that $(X,g')$ is not strongly refocusing for any $g'\in \mathcal{R}$.
\end{cor}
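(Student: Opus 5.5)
We take $\mathcal{R}\coloneqq\mathcal{R}^{\infty}$, the residual set produced by \theoref{thm: For N large enough generic spacetimes are not N-refocusing} with $k=\infty$ and some fixed $N\geq 2d+1$, where $d=\dim(X)\geq 3$. It then suffices to show the following: if $g'\in\mathcal{R}$ were strongly refocusing with respect to some $p\neq q$, then $g'$ would be $N$-fold null-geodesically connecting through $X\setminus A$.

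\emph{Step 1: strong refocusing inside $A$ is excluded.} Suppose $g'\in\mathcal{R}$ is strongly refocusing with respect to $p\neq q$. By \remref{rem: strongly refocusing within a subset is stable}, $g'$ is strongly refocusing within $A$ if and only if $g$ is. By hypothesis $g$ is not, so $(p,q)$ does not witness strong refocusing within $A$ for $g'$. Hence either $p\notin A$, or $q\notin A$, or $C_{g'}(p,q)\not\subseteq A$. In each case some $g'$-null-geodesic segment from $p$ to $q$ meets $X\setminus A$. If $p$ or $q$ lies outside $A$, every segment meets $X\setminus A$. Otherwise some point of $C_{g'}(p,q)$ lies in $X\setminus A$, and hence on some segment.

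\emph{Step 2: one segment through $X\setminus A$ gives many.} Orient so that $q\in J^+(p)$. Strong refocusing means that for every future null direction $w$ at $p$, the geodesic $\exp^{g'}_p(tw)$ reaches $q$ at some first parameter $t(w)>0$, which is finite. Fix a segment $\gamma_0$ with $\gamma_0(0)=p$, $\gamma_0(1)=q$ and $\gamma_0(t_0)\in X\setminus A$, where $X\setminus A$ is open. The plan is to show that nearby directions $w$ also produce segments from $p$ to $q$ passing through $X\setminus A$. The set of future null directions at $p$ is a sphere $S^{d-2}$, which has positive dimension and infinitely many points since $d\geq 3$. If we can arrange that $w\mapsto t(w)$, or some branch of return times, is continuous near the direction of $\gamma_0$, then the points $\exp_p^{g'}(s\,t(w)w)$ stay in the open set $X\setminus A$ for $s$ near $t_0$. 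This gives $N$ pairwise non-proportional null vectors $v_i=t(w_i)w_i$ with $\exp_p^{g'}(v_i)=q$, each segment meeting $X\setminus A$. That is $N$-fold connectivity through $X\setminus A$, contradicting $g'\in\mathcal{R}$.

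\emph{Main obstacle.} The hard part is the continuity, or at least local boundedness and closedness, of the return time near $\gamma_0$. The set $\{(w,t)\mid \exp_p^{g'}(tw)=q\}$ is closed in $S^{d-2}\times(0,\infty)$. By global hyperbolicity, all segments from $p$ to $q$ lie in the compact diamond $J^+(p)\cap J^-(q)$, and affine parameters normalised at $p$ stay bounded there. Hence this set is compact and projects onto $S^{d-2}$. Since each null geodesic of a globally hyperbolic (causal) spacetime passes through $q$ at most once, $t(w)$ is a well-defined function. A function with closed, compact graph is continuous, so $t$ is continuous on all of $S^{d-2}$. The conclusion of Step 2 then follows directly. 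I expect the bulk of the care to go into justifying the compactness and single-valuedness claims, including ruling out the case $p\in J^+(q)$ after reorienting. Step 1 and the final appeal to \theoref{thm: For N large enough generic spacetimes are not N-refocusing} are routine.
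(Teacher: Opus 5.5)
Your proposal is correct. Step 1 is the same as the paper's: both invoke \remref{rem: strongly refocusing within a subset is stable} to force some null segment from $p$ to $q$ to meet $X\setminus A$.

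Step 2 takes a different route. The paper argues by dimension count. It takes the regular part $C_r$ of $C_{g'}(p,q)$ and asserts that $C_r$ is a dense $(\dim X-1)$-dimensional null hypersurface. Then $C_r\cap(X\setminus A)$ contains a piece of dimension $\dim X-1\geq 2$, which cannot be covered by the images of finitely many segments. Hence infinitely many segments from $p$ to $q$ pass through $X\setminus A$.

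You instead show that the arrival parameter $t(w)$ is continuous on the sphere $S^{d-2}$ of future null directions at $p$. An open set of directions around that of $\gamma_0$ then gives segments whose points at parameter $t_0$ stay in $X\setminus A$, and you pick $N$ of them explicitly.

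Your route is more elementary and self-contained. It avoids defining the regular part of $C_{g'}(p,q)$ and justifying its density; the paper asserts that density without proof, and it implicitly rests on conjugate points along null geodesics being isolated. Your route also produces the $N$ segments directly rather than by contradiction. The paper's route, in turn, needs no information on how the arrival parameter depends on the direction.

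For the step you flag as delicate, a short argument suffices. By causality, the point $\exp_p^{g'}((t(w)+\varepsilon)w)$ just past $q$ lies outside $J^-(q)$. Since $J^-(q)$ is closed and the domain of $\exp_p^{g'}$ is open, for $w'$ near $w$ the point $\exp_p^{g'}((t(w)+\varepsilon)w')$ is also outside $J^-(q)$. But every point of the segment from $p$ to $q$ in direction $w'$ lies in $J^-(q)$, so $t(w')<t(w)+\varepsilon$. This one observation does three things:
\begin{itemize}
\item it gives the local upper bound on $t$;
\item it keeps limits of $t(w_n)w_n$ inside the domain of $\exp_p^{g'}$, so the graph of $t$ is closed;
\item combined with $t$ being bounded away from $0$ (as $p\neq q$) and uniqueness of the return time, it yields continuity of $t$ without appealing to general imprisonment results.
\end{itemize}
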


\begin{proof}
By \theoref{thm: For N large enough generic spacetimes are not N-refocusing}, there exists a residual set $\mathcal{R}\subseteq \mathcal{M}^{\infty}_{(A,g)}$ of metrics which are not $N$-fold null-geodesically connecting through $X\setminus A$. Take $g'\in \mathcal{R}$ and $p,q\in X$, $p\neq q$. Since $(X,g)$ is not strongly refocusing within $A$, neither is $(X,g')$ by \remref{rem: strongly refocusing within a subset is stable}. If $g'$ were strongly refocusing with respect to $p$ and $q$, then we must have $C_{g'}(p,q) \cap (X\setminus A)\neq \emptyset$, as else $(X,g')$ would be strongly refocusing within $A$. Let $C_r\subseteq C_{g'}(p,q)$ denote the regular part of $C_{g'}(p,q)$. Then $C_r$ is a $\dim(X)-1$ dimensional null hypersurface in $(X,g')$. Since $X\setminus A$ is open and since $C_r\subseteq C_{g'}(p,q)$ is dense, $C_r\cap (X\setminus A)$ is non-empty and open in $C_r$, hence contains a smooth submanifold of dimension $\dim(X)-1\geq 2$. If there were only finitely many $g'$-null-geodesic segments connecting $p$ and $q$ through $X\setminus A$, then $C_r\cap (X\setminus A)$ would be contained in a finite union of $1$-dimensional images, which is impossible. Thus there are infinitely many, and in particular $N=2\dim(X)+1$, $g'$-null-geodesics connecting $p$ to $q$ through $X\setminus A$, contradicting the fact that $g'\in\mathcal R$. So $g'$ cannot be strongly refocusing with respect to any two distinct points $p,q\in X$.
\end{proof}

\subsection{Proof of the main theorem}

\begin{defin}\label{def:preorder on pairs of points w.r.t. which a spacetime is strongly refocusing}
Let $(X,g)$ be a strongly refocusing globally hyperbolic spacetime and let $\mathcal{T}:X\to \R$ be a Cauchy temporal function. Let
$$R\coloneqq\{(p,q)\in X\times X \mid q\in J^+(p), (X,g) \text{ strongly refocusing with respect to } p,q\}.$$
We define a preorder $\preceq_{\mathcal{T}}$ on $R$ by saying that
$$(p_1,q_1)\preceq_{\mathcal{T}} (p_2,q_2) \iff [\mathcal{T}(p_1),\mathcal{T}(q_1)] \subseteq [\mathcal{T}(p_2),\mathcal{T}(q_2)].$$
We write $(p_1,q_1)\prec_{\mathcal T}(p_2,q_2)$ if $(p_1,q_1)\preceq_{\mathcal T}(p_2,q_2)$ but $(p_2,q_2)\not\preceq_{\mathcal T}(p_1,q_1)$. We say an element $(p,q)\in (R,\preceq_{\mathcal{T}})$ is \textit{minimal} if there exists no $(p',q')\in R$ with $(p',q')\prec_{\mathcal{T}} (p,q) $.
\end{defin}

\begin{lem}\label{lem:minimal elements exist}
Let $(X,g)$ be a strongly refocusing globally hyperbolic spacetime and $\mathcal{T}:X\to \R$ a Cauchy temporal function. Let $(R,\preceq_{\mathcal{T}})$ be as in \defref{def:preorder on pairs of points w.r.t. which a spacetime is strongly refocusing}. Then $(R,\preceq_{\mathcal{T}})$ has a minimal element.
\end{lem}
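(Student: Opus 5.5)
The idea is to minimize the length of the time interval $\mathcal{T}(q)-\mathcal{T}(p)$ over $R$, and to show that a minimizer exists by a compactness argument. A minimizer of the length is automatically minimal for $\preceq_{\mathcal{T}}$. Indeed, if $(p',q')\prec_{\mathcal T}(p,q)$, then the interval $[\mathcal T(p'),\mathcal T(q')]$ is a proper subinterval of $[\mathcal T(p),\mathcal T(q)]$, so it is strictly shorter.

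Start from any $(p_0,q_0)\in R$, which exists by assumption. Restrict to
$$R_0=\{(p,q)\in R \mid (p,q)\preceq_{\mathcal T}(p_0,q_0)\}$$
and set $\ell=\inf_{R_0}(\mathcal T(q)-\mathcal T(p))\ge 0$. An element of $R_0$ realizing $\ell$ is minimal in $R$, because anything strictly below it is again in $R_0$. So take a sequence $(p_n,q_n)\in R_0$ with lengths tending to $\ell$. All $p_n,q_n$ lie in the slab $\mathcal T^{-1}([a,b])$, where $a=\mathcal T(p_0)$ and $b=\mathcal T(q_0)$. This slab need not be compact when the Cauchy surface is noncompact, so compactness has to be obtained differently.

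For that I would use strong refocusing itself. Every null geodesic through $p_n$ passes through $q_n$, with $q_n\in J^+(p_n)$, $q_n\ne p_n$. Hence $\partial J^+(p_n)$ refocuses at $q_n$: the future null cone of $p_n$ closes up at $q_n$ and is contained in $J^-(q_n)\cap J^+(p_n)$, which is compact. The set of null directions at $p_n$ is compact, so the cone projects onto a compact subset of a Cauchy slice. In fact, the known topological consequence (Cauchy surface compact, as in the appendix results for strongly refocusing globally hyperbolic spacetimes of dimension $\ge 3$) gives compactness of the slab directly. I would take the simpler route: first argue that the Cauchy surface is compact, via the fact that $\sky_{p}$ and $\sky_q$ are Legendrian isotopic fibers and all of $\mathcal N$ is swept out by null geodesics through $p$, so that $\mathcal N=\sky_p$-saturated. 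If this is unavailable in low dimensions, fall back to the bounded-diamond argument: the points $q_n$ lie in $J^+(p_n)$ at bounded time, and the refocusing cone structure confines them.

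Given compactness of the slab, pass to a subsequence with $p_n\to p$ and $q_n\to q$. Strong refocusing is closed. For a null vector $v$ at $p$, approximate it by null vectors $v_n$ at $p_n$; the geodesics from $v_n$ hit $q_n$ at affine parameters that stay bounded, by compactness of the diamonds and the non-imprisonment property of globally hyperbolic spacetimes. The limit geodesic therefore passes through $q$, and $q\in J^+(p)$ because $J^+$ is closed.

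The main obstacle is ruling out $p=q$, i.e. $\ell=0$. For this I would show that there is a uniform positive lower bound on $\mathcal T(q)-\mathcal T(p)$ for $(p,q)\in R$ with $p$ in a compact set. Near $p$, use a convex normal neighborhood $U$ with compact closure. Inside $U$, null geodesics from $p$ do not return to any point other than along a single geodesic, so $q$ must lie outside $U$. Choosing neighborhoods uniformly over the compact set of $p$'s gives a uniform lower bound $\delta>0$. Hence $\ell\ge\delta>0$ and $p\neq q$. The limit pair therefore lies in $R_0$, it realizes $\ell$, and it is a minimal element of $(R,\preceq_{\mathcal{T}})$.
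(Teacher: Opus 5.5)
Your strategy is the paper's. You fix $(p_0,q_0)\in R$, restrict to pairs below it, and minimize $\mathcal T(q)-\mathcal T(p)$, using compactness of the slab $\mathcal T^{-1}([\mathcal T(p_0),\mathcal T(q_0)])$ and closedness of $R$. Your check that a length-minimizer is $\preceq_{\mathcal T}$-minimal is correct. Your treatment of closedness of $R$, including $p\neq q$ for the limit pair, is more detailed than the paper's ``it is easy to see''. One imprecision there: ``$q$ must lie outside $U$'' needs strong causality. The connecting null geodesics could leave $U$ and re-enter, so you must pass to a smaller $V\subset U$ such that causal curves with endpoints in $V$ stay in $U$. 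It is simpler to argue directly in the limit than through a uniform $\delta$.

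\textbf{Compactness step.} This is where the proposal has a real gap.
\begin{itemize}
\item Your claim that the slab ``need not be compact'' is false under the hypotheses. Strong refocusing implies refocusing, and by Low's theorem every Cauchy surface of a globally hyperbolic refocusing spacetime is compact, with no dimension restriction. This is exactly what the paper cites. The slab is then $\cong M\times[\mathcal T(p_0),\mathcal T(q_0)]$, which is compact.
\item The ``simpler route'' you offer rests on a false statement. The null geodesics through $p$ sweep out only $\sky_p$, not all of $\mathcal N$.
\item The ``bounded-diamond'' fallback does not work. Compactness of each $J^+(p_n)\cap J^-(q_n)$ says nothing about where $p_n$ sits, so in a noncompact slab the minimizing sequence could escape to infinity.
\item Citing the appendix result for dimension at least $3$ would leave dimension $2$ uncovered, and the lemma does not exclude it.
\end{itemize}

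\textbf{How to close the gap.} Either cite Low, as the paper does, or finish your cone idea as follows.
\begin{itemize}
\item At least two distinct null geodesics from $p_0$ pass through $q_0$, and they arrive there with non-proportional tangents.
\item Hence any point on a null geodesic from $p_0$ strictly beyond $q_0$ is reached by a broken null curve, so it lies in $I^+(p_0)$.
\item Therefore $\partial J^+(p_0)=J^+(p_0)\setminus I^+(p_0)\subseteq J^+(p_0)\cap J^-(q_0)$.
\item Take a Cauchy surface $M'\subset I^+(q_0)$. The set $I^+(p_0)\cap M'$ is nonempty and open in $M'$.
\item It is also closed in $M'$, because $M'$ cannot meet $\partial J^+(p_0)\subseteq J^-(q_0)$.
\item So $M'\subseteq I^+(p_0)$, hence $M'=J^+(p_0)\cap M'$, which is compact by global hyperbolicity.
\end{itemize}
Everything after this step (bounded affine parameters via non-imprisonment, passing strong refocusing to the limit) depends on it.
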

\begin{proof}
It is easy to see that $R\subseteq X\times X$ is closed. Choose any $(p_0,q_0)\in R$ and set
$$D\coloneqq\{(p,q)\in R \mid (p,q)\preceq_{\mathcal{T}}(p_0,q_0)\}.$$
Since $(X,g)$ is strongly refocusing, hence refocusing, any Cauchy surface of $(X,g)$ is compact (see \cite{Low2006}). Hence
$$\mathcal{T}^{-1}([\mathcal{T}(p_0),\mathcal{T}(q_0)])\simeq M\times[\mathcal{T}(p_0),\mathcal{T}(q_0)]$$
is compact, where $M$ is a Cauchy surface of $X$. Therefore $D$ is a compact subset of $R\subseteq X\times X$. Define
$$L:D\to\R,\quad L(p,q)\coloneqq \mathcal{T}(q)-\mathcal{T}(p).$$
By compactness, $L$ attains a minimum at some $(p',q')\in D$. It is easy to see that $(p',q')$ is minimal in $(R,\preceq_{\mathcal{T}})$.
\end{proof}

\begin{lem}\label{lem:cutting makes strong refocusing refocusing}
Let $(X,g)$ be strongly refocusing with respect to points $p,q\in X$, $q\in J^+(p)$. Let $\mathcal{T}:X\to \R$ be a surjective Cauchy temporal function, $s=\mathcal{T}(p)$ and $t=\mathcal{T}(q)$. Let $X'=\mathcal{T}^{-1}((-\infty,t))$. Then $(X',g|_{X'})$ is Legendrian refocusing within $A = \mathcal{T}^{-1}([s,t))$.
\end{lem}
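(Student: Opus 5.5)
We have to exhibit a point, a Cauchy surface, a neighborhood and a sequence satisfying the definition of Legendrian refocusing within $A$ for $(X',g|_{X'})$. The natural choice is the point $p$, the Cauchy surface $M\coloneqq \mathcal{T}^{-1}(s)$ of $X$, and $q_n$ obtained by sliding $q$ slightly to the past. Concretely, fix a past-directed timelike curve $c$ from $q$, for instance the integral curve of $\grad\mathcal{T}$ through $q$ (which is past-pointing), parametrized so that $\mathcal{T}(c(\tau))=t-\tau$. Put $q_n\coloneqq c(1/n)$ for $n$ large. Then $q_n\in X'$ and $\mathcal{T}(q_n)\in[s,t)$, so $q_n\in A$. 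Also $q_n\to q\neq p$, so after passing to a tail and choosing a small neighborhood $U$ of $p$ with $q\notin\overline{U}$, we get $q_n\in A\setminus U$ and $p$ is not an accumulation point. First we note that $X'$ is globally hyperbolic: $\mathcal{T}|_{X'}$ is a surjective Cauchy temporal function onto $(-\infty,t)$, so its level sets are still Cauchy in $X'$. Moreover $M\subseteq X'$ is a smooth spacelike Cauchy surface of $X'$ containing $p$. Because null geodesics of $X'$ are the restrictions of those of $X$, the identification $\mathcal{N}_{X'}\simeq ST^*M\simeq \mathcal{N}_X$ is canonical. Under it, $\sky^{X'}_{x}=\sky^X_x$ for every $x\in X'$, and $\sky_p$ is the fiber $ST^*_pM$.

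Next we verify $C_g(q_n,M)\subseteq A$. Every null geodesic from $q_n$ meets $M$ exactly once, and $\mathcal{T}$ is monotone along it. Hence the segment from $q_n$ to $M$ lies in $\mathcal{T}^{-1}([s,\mathcal{T}(q_n)])\subseteq \mathcal{T}^{-1}([s,t))=A$. This is also a segment in $X'$, so the condition holds in $X'$.

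The main step is showing $\sky_{q_n}\to\sky_p$ in the $\mathcal{C}^\infty$ topology on Legendrians. We work in $X$ with the Cauchy surface $M$. The sky of a point $x$ near $q$ is the image of the smooth map $\Phi_x: ST_x^{\mathrm{null},-}X\to ST^*M$ that follows the past null geodesic from $x$ in a given null direction until it hits $M$. The null direction sphere bundle is a smooth $S^{d-2}$-bundle over $X$, and the hitting time of $M$ depends smoothly on the initial data, since $M$ is a spacelike hypersurface transversal to null geodesics. Trivializing the bundle near $q$ gives a smooth family $x\mapsto \Phi_x$ of maps $S^{d-2}\to ST^*M$, with $\Phi_{q_n}\to\Phi_q$ in $\mathcal{C}^\infty$. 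The hypothesis of strong refocusing says that $\Phi_q$ has image $\sky_q=\sky_p=ST^*_pM$, using Kinlaw's Proposition 3.3 that all null geodesics through $q$ pass through $p$. The obstacle is that $\mathcal{C}^\infty$ convergence of parametrizations gives $\mathcal{C}^\infty$ convergence of the Legendrian submanifolds only if $\Phi_q$ is an embedding, or at least a diffeomorphism onto $ST^*_pM$. We argue this as follows. $\Phi_q$ is a smooth map between closed $(d-2)$-manifolds. It is injective, since distinct null directions at $q$ give distinct lightrays. It is surjective onto the fiber by the reverse strong refocusing, since every null geodesic through $p$ passes through $q$. A continuous bijection from a compact space is a homeomorphism. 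For immersivity we use the smooth dependence above and the fact that $\sky_q$ is a Legendrian that is a smooth embedded submanifold by Low's theory. Skies are always embedded Legendrian spheres, since the sky map $S_x^{d-2}\to\mathcal{N}$ is an embedding for any $x$ in a globally hyperbolic spacetime, coming from the local diffeomorphism between null directions at $x$ and the sky. Thus $\Phi_q$ is an embedding with image $ST_p^*M$. Embeddings form an open set in $\mathcal{C}^\infty(S^{d-2},ST^*M)$, so $\Phi_{q_n}$ are embeddings converging in $\mathcal{C}^\infty$ to a parametrization of $\sky_p$. Hence $\sky^{X'}_{q_n}=\sky_{q_n}\to\sky_p$ in $\mathcal{L}$, which completes the verification of Legendrian refocusing within $A$.
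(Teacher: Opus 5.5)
Your proposal is correct and follows essentially the same route as the paper. Like the paper, you identify $\mathcal{N}_X$ with $\mathcal{N}_{X'}$, take $q_n\to q$ inside $A$, use $\sky_q=\sky_p$ together with continuity of the sky map into $\mathcal{L}$, and check $C_g(q_n,M)\subseteq A$ for $M=\mathcal{T}^{-1}(s)$ by monotonicity of $\mathcal{T}$. The one difference is that you justify the $\mathcal{C}^\infty$ convergence $\sky_{q_n}\to\sky_q$, via a smooth family of parametrizations whose limit is an embedding, whereas the paper uses this step without comment.
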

\begin{proof}
Note that the spaces of lightrays $\mathcal{N}_X$ of $(X,g)$ and $\mathcal{N}_{X'}$ of $(X',g|_{X'})$ are canonically diffeomorphic through the map $\mathcal{N}_X \to \mathcal{N}_{X'}$ given by restricting an unparametrized lightray in $X$ to its segment in $X'$. This yields a canonical identification of the space of Legendrian submanifolds $\mathcal{L}_X$ of $\mathcal{N}_X$ with the space of Legendrian submanifolds $\mathcal{L}_{X'}$ of $\mathcal{N}_{X'}$ Legendrian isotopic to a sky. Take any sequence $(q_n)_{n\in \N}$ in $A$ converging towards $q$ in $X$. Then, in $\mathcal{L}_X$ we have
$$\lim_{n\to \infty} \sky^X_{q_n} = \sky^X_q = \sky^X_p,$$
where $\sky^X_p=\sky^X_q$ holds because $(X,g)$ is strongly refocusing with respect to the pair $(p,q)$. Hence in $\mathcal{L}_{X'}$ we get
$$\lim_{n\to \infty} \sky^{X'}_{q_n} = \sky^{X'}_p,$$
even though $q_n \nrightarrow p$. Let $M=\mathcal{T}^{-1}(s)\subseteq A$. Then every null-geodesic segment connecting a point $q_n$ to $M$ lies entirely in $A$. We conclude that $(X',g|_{X'})$ is Legendrian refocusing within $A$ with respect to $p$ and the sequence $(q_n)_{n\in \N}$.
\end{proof}

\begin{thm}\label{thm:main}
Every globally hyperbolic strongly refocusing spacetime $(X,g)$ of dimension at least $3$ admits a globally hyperbolic metric $g'$ on $X$ which is Legendrian refocusing (hence in particular refocusing) but not strongly refocusing.
\end{thm}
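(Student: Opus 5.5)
We build $g'$ on $X'$, an open slab of $X$ diffeomorphic to $X$, and transport it back to $X$. Fix a surjective Cauchy temporal function $\mathcal{T}$ for $(X,g)$. By \lemref{lem:minimal elements exist}, the preorder $(R,\preceq_{\mathcal T})$ has a minimal element $(p,q)$, with $q\in J^+(p)$. Put $s=\mathcal{T}(p)$ and $t=\mathcal{T}(q)$. Then $s<t$: if $s=t$, then $q\in J^+(p)$ would force $p=q$, since Cauchy surfaces are acausal. Let $X'=\mathcal{T}^{-1}((-\infty,t))$. Then $(X',g|_{X'})$ is globally hyperbolic with Cauchy surface $\mathcal{T}^{-1}(s)$, which is diffeomorphic to a Cauchy surface $M$ of $X$. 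By Bernal--S\'anchez, $X'\cong M\times\R\cong X$, so it suffices to produce the desired metric on $X'$.

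By \lemref{lem:cutting makes strong refocusing refocusing}, $(X',g|_{X'})$ is Legendrian refocusing within $\mathcal{T}^{-1}([s,t))$. We need a regular closed subset of $X'$, and we want to take it as small as possible while keeping the Legendrian refocusing data. First try
$$A\coloneqq \overline{\mathcal{T}^{-1}((s-\delta,t))}\cap X' = \mathcal{T}^{-1}([s-\delta,t))$$
for small $\delta>0$. This set is regular closed in $X'$ and contains $\mathcal{T}^{-1}([s,t))$, so $(X',g|_{X'})$ is Legendrian refocusing within $A$ by the same sequence $q_n\to q$. Now suppose $(X',g|_{X'})$ is strongly refocusing within $A$ with respect to some $p',q'\in A$ with $q'\in J^+(p')$. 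Then $(X,g)$ is strongly refocusing with respect to $p',q'$ as well, because every null geodesic of $X$ through $p'$ restricts to one of $X'$ through $p'$, which then passes through $q'$. So $(p',q')\in R$ with $[\mathcal T(p'),\mathcal T(q')]\subseteq[s-\delta,t)$.

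The main obstacle is excluding such pairs, because the interval $[s-\delta,t)$ is not contained in $[s,t]$. To handle it, first let $\delta\to 0$ along a sequence. The closedness of $R$ and compactness give a limit pair with interval contained in $[s,t]$ and $\mathcal{T}(q')\le t$. It has $\mathcal T(q')-\mathcal T(p')\geq c>0$, where the positive lower bound comes from closedness of $R$ together with $p'\neq q'$: strong refocusing pairs cannot collapse, since near a point null geodesics are locally embedded. If the limit interval is a proper subinterval of $[s,t]$, this contradicts minimality. If it equals $[s,t]$, then the approximating pairs have $\mathcal T(q')\to t$ from below.

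To avoid this borderline case, modify the choices: minimize $L$ over all of $R$ restricted to the compact region, so that $(p,q)$ has minimal length $t-s$. Then any pair inside $[s-\delta,t)$ has length at least $t-s$, hence its interval lies in $[s-\delta,s+\delta')$ plus a length of at least $t-s$, which is impossible once $\mathcal T(q')<t$ and $\mathcal T(p')\ge s-\delta$, unless $\mathcal T(p')<s$. Use instead $A=\mathcal T^{-1}([s,t))$, which is regular closed in $X'$ as the closure of $\mathcal T^{-1}((s,t))$. Then $\mathcal T(p')\ge s$ and $\mathcal T(q')<t$ give length less than $t-s$, contradicting minimal length. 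Minimal length is attainable because $L$ is continuous and positive on $R$, and the relevant sublevel set is compact after translating: the Cauchy surface is compact, but time is not, so one should pass to a minimizing sequence inside a fixed compact time slab, or argue by local minimality instead. Thus $g|_{X'}$ is not strongly refocusing within $A$.

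Finally, apply \corolref{cor: Avoiding refocusing in A means we can avoid refocusing in X} to $(X',g|_{X'})$ and $A$. This gives a residual, hence nonempty, set of $g'\in\mathcal M^\infty_{(A,g)}$ such that $(X',g')$ is not strongly refocusing. Each such $g'$ is globally hyperbolic, and by \propref{prop: Legendrian refocusing within a subset is stable} it is still Legendrian refocusing within $A$, hence Legendrian refocusing. Pulling $g'$ back along a diffeomorphism $X\cong X'$ finishes the proof.
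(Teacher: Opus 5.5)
Your final argument is the paper's proof: with $(p,q)$ minimal for $\preceq_{\mathcal T}$ and $A=\mathcal T^{-1}([s,t))\subseteq X'=\mathcal T^{-1}((-\infty,t))$, any pair strongly refocusing within $A$ lies in $R$ with time interval inside $[s,t)$, hence is $\prec_{\mathcal T}(p,q)$; then \corolref{cor: Avoiding refocusing in A means we can avoid refocusing in X}, \propref{prop: Legendrian refocusing within a subset is stable} and a diffeomorphism $X\cong X'$ finish it. The detour through $\mathcal T^{-1}([s-\delta,t))$ and the hedged appeal to ``minimal length'' are unnecessary, since the preorder-minimality from \lemref{lem:minimal elements exist} already gives the contradiction directly (and that lemma's proof produces $(p,q)$ as a minimizer of $L$ over a compact slab that contains every competing pair anyway).
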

\begin{proof}
Let $\mathcal{T}:X\to \R$ be a surjective Cauchy temporal function. Let $(R,\preceq_{\mathcal{T}})$ be as in \defref{def:preorder on pairs of points w.r.t. which a spacetime is strongly refocusing} and let $(p,q)\in (R,\preceq_{\mathcal{T}})$ be a minimal element. Let $s = \mathcal{T}(p)$ and let $t=\mathcal{T}(q)$. Let $X' = \mathcal{T}^{-1}((-\infty,t))$. Then $(X',g|_{X'})$ is globally hyperbolic. By \lemref{lem:cutting makes strong refocusing refocusing}, $(X',g|_{X'})$ is Legendrian refocusing within $A\coloneqq \mathcal{T}^{-1}([s,t))$. If $(X',g|_{X'})$ were strongly refocusing with respect to a pair of points $p',q'\in A$ with $q'\in J^+(p')$, then we would have $(p',q')\prec_{\mathcal{T}}(p,q)$, contradicting the minimality of $(p,q)$. We conclude $(X',g|_{X'})$ is not strongly refocusing within $A$. Since $A\subseteq X'$ is a regular closed subset, by \corolref{cor: Avoiding refocusing in A means we can avoid refocusing in X}, there exists a globally hyperbolic metric $g'\in \mathcal{M}_{(A,g|_{X'})}^{\infty}$ so that $(X',g')$ is not strongly refocusing. Note that by \propref{prop: Legendrian refocusing within a subset is stable}, $(X',g')$ is Legendrian refocusing within $A$. Since $X$ and $X'$ are diffeomorphic we can simply choose a diffeomorphism $\Phi:X\to X'$ and consider the spacetime $(X,\Phi^{\ast}g')$, which is globally hyperbolic, Legendrian refocusing, but not strongly refocusing.
\end{proof}

\section{Legendrian refocusing spacetimes admit strongly refocusing metrics}\label{sec:Legendrian refocusing spacetimes admit strongly refocusing metrics}

\subsection{Scattering of null-geodesic fields}
In the two-part paper \cite{GluckSinger1978, GluckSinger1979}, Gluck and Singer study the scattering of geodesic fields on Riemannian manifolds. In particular they develop a necessary and sufficient criterion for being able to deflect a geodesic vectorfield onto another \cite[Theorem~1]{GluckSinger1978}. We mirror their techniques to prove the following lemma, which is similar in spirit to their work, giving a sufficient criterion for being able to deflect one null-geodesic vectorfield onto another.

\begin{lem}\label{lem:scattering lemma}
Let $M$ be a smooth, connected manifold and let $S\subset M$ be a compact hypersurface (without boundary) with collar neighborhood $C=S\times[-\varepsilon,\varepsilon]\subseteq M$. Let $\delta>0$. Suppose that a smooth family of Riemannian metrics $h_t$ on $M$ is given for $t\in[-\delta,0]\cup[1,1+\delta]$. Let $V_0\in \Gamma(TM|_{S\times\{-\varepsilon\}})$ be the $h_0$-unit normal to $S\times\{-\varepsilon\}$ pointing into $C$, and let $V_1\in \Gamma(TM|_{S\times\{\varepsilon\}})$ be the $h_1$-unit normal to $S\times\{\varepsilon\}$ pointing out of $C$. Then there exists a diffeomorphism $\Phi:S\times [0,1] \to C$ and an extension of the family of metrics $\{h_t\}$ to all $t\in[-\delta,1+\delta]$ such that
\begin{itemize}
\item $\Phi(p,0)=(p,-\varepsilon)$ and
$\Phi(p,1)=(p,\varepsilon)$;
\item for every $p\in S$, the curve
$\gamma_p(t)=(\Phi(p,t),t)$ is a null pregeodesic for $g\coloneqq h_t-dt^2$;
\item $\frac{d}{dt}|_{t=0}\Phi(p,t)=V_0(p)$ and
$\frac{d}{dt}|_{t=1}\Phi(p,t)=V_1(p)$.
\end{itemize}
\end{lem}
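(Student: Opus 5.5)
The plan is to reduce the statement to prescribing the $1$-jet of $h_t$ along a family of disjoint curves. A curve $t\mapsto (c(t),t)$ in $M\times\R$ is $g$-null precisely when $|\dot c(t)|_{h_t}=1$. In that parametrization, being a null pregeodesic of $g=h_t-dt^2$ is a second-order ODE for $c$. Its coefficients involve only $h_t$, its first spatial derivatives and $\partial_t h_t$ evaluated at the point $c(t)$: the Christoffel symbols of $g$ mix $\Gamma(h_t)$ and $\partial_t h_t$, and one projects out the component along $\dot\gamma$ to account for the freedom of reparametrization. So the strategy is to first choose the curves, i.e.\ the diffeomorphism $\Phi$, and only afterwards choose the metrics $h_t$ on $[0,1]$ so that these curves satisfy the equation.

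\textbf{Step 1 (ends).} Extend the given families $h_t$ smoothly past $t=0$ and $t=1$ to $[-\delta,\eta]$ and $[1-\eta,1+\delta]$ for some small $\eta>0$, for instance by Seeley extension in $t$. For $p\in S$, let $c^0_p$ be the spatial part of the $g$-null geodesic through $(p,-\varepsilon)$ at time $0$ with velocity $V_0(p)+\partial_t$, parametrized by $t\in[0,\eta]$. This vector is null because $V_0$ is $h_0$-unit. Define $c^1_p$ on $[1-\eta,1]$ analogously, ending at $(p,\varepsilon)$ with velocity $V_1(p)+\partial_t$. Since $V_0$ points into $C$ and $V_1$ points out of $C$, shrinking $\eta$ guarantees that these pieces enter, respectively leave, $C$ transversally to the slices $S\times\{s\}$. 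Moreover $(p,t)\mapsto c^0_p(t)$ and $(p,t)\mapsto c^1_p(t)$ are embeddings of $S\times[0,\eta]$ and $S\times[1-\eta,1]$.

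\textbf{Step 2 (interpolation).} Choose a diffeomorphism $\Phi:S\times[0,1]\to C$ with $\Phi(p,t)=c^0_p(t)$ for $t\le\eta$ and $\Phi(p,t)=c^1_p(t)$ for $t\ge 1-\eta$. In collar coordinates, take $\Phi(p,t)=(\phi_t(p),\sigma(p,t))$, where $\sigma$ is strictly increasing in $t$ and $\phi_t$ is a path of diffeomorphisms of $S$ joining the two end germs; such a path exists because both germs are close to the identity of $S$. Compactness of $S$ gives that $\Phi$ is a diffeomorphism. In particular the graph $\Gamma=\{(\Phi(p,t),t)\}$ is an embedded submanifold of $C\times[0,1]$ meeting each time slice in an embedded hypersurface $\Phi(S\times\{t\})$.

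\textbf{Step 3 (middle metrics).} On $[\eta,1-\eta]$ construct $h_t$ in two stages.
\begin{itemize}
\item Along $\Gamma$, prescribe $h_t$ at $\Phi(p,t)$ so that $\partial_t\Phi$ is $h_t$-unit.
\item Prescribe the first normal and $t$-derivatives of $h_t$ along $\Gamma$ so that the null-geodesic ODE holds there.
\end{itemize}
The second stage is an affine condition on the $1$-jet, and it is solvable pointwise. Indeed, the spatial Christoffel symbols depend linearly on first derivatives of $h_t$ in directions transverse to the curve, and those transverse derivatives can be chosen freely; the freedom here is the same as in the perturbation used in \lemref{lem:Geodesics can be steered by changing the metric}. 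Near $t=\eta$ and $t=1-\eta$ the prescribed jets are taken equal to those of the extensions from Step 1, which already satisfy the equation there. Then realize this $1$-jet by an actual smooth family: extend it by a Whitney-type extension to a neighborhood of $\Gamma$, patch it with an arbitrary positive-definite family away from $\Gamma$ using a partition of unity in $(x,t)$ that equals $1$ near $\Gamma$, and use that positive-definiteness is a convex, open condition. Finally, replace $\eta$ by $\eta/2$ and use cutoffs in $t$ so that the family agrees with the Step 1 extensions near the ends, which makes it smooth on $[-\delta,1+\delta]$.

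\textbf{Main obstacle.} The main obstacle is consistency at the junctions $t\approx\eta$ and $t\approx 1-\eta$, together with showing that the prescribed $1$-jet is genuinely unconstrained. The first-order data along $\Gamma$ in the tangential directions $\partial_t\Phi+\partial_t$ and $T\Phi(S\times\{t\})$ are determined by the zeroth-order data on $\Gamma$, so only transverse derivatives are free. I would verify, by writing the geodesic equation in adapted coordinates in which $\Gamma$ is $\{x^1=\dots=x^{n-1}=0\}$ with a single coordinate along each curve, that the required acceleration can be produced using only transverse derivatives of $h_t$. If a direct computation proves awkward, my fallback is the Gluck--Singer device: arrange $\partial_t\Phi=\operatorname{grad}_{h_t}f$ with $|df|_{h_t}=1$, where $f=\mathrm{pr}_2\circ\Phi^{-1}$. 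Then the null field $\partial_t+\operatorname{grad}_{h_t}f$ is $g$-dual to the exact form $d(f-t)$, which is therefore closed, so its integral curves are automatically null pregeodesics.
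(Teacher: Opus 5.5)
There is a genuine gap in your Step 3. You justify the $1$-jet stage as ``solvable pointwise'' because transverse derivatives are free, as in \lemref{lem:Geodesics can be steered by changing the metric}. That argument treats each $\gamma_p$ as an isolated curve. In fact the $\gamma_p$ sweep out the hypersurface $\mathcal{H}=\{(\Phi(p,t),t)\}$ of $M\times[0,1]$, on which your first stage has already fixed $h_t$. Only one transverse direction remains, and the components of the acceleration tangent to $\mathcal{H}$ are already determined by the $0$-jet. Concretely, let $K=(\partial_t\Phi,\partial_t)$ and $Y=(d\Phi_tX,0)$ for a vector field $X$ on $S$. Then $[K,Y]=0$ and $g(K,K)=0$ on $\mathcal{H}$, so
\[
g(\nabla_KK,Y)=K\,g(K,Y)-\tfrac12\,Y\,g(K,K)=\partial_t a,\qquad a(p,t):=h_t(\partial_t\Phi,d\Phi_tX).
\]
If $\gamma_p$ is a pregeodesic, then $\nabla_KK=\lambda K$, hence $\partial_t a=\lambda a$. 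Since $a(p,0)=0$ (because $V_0$ is normal to $S\times\{-\varepsilon\}$), this forces $a\equiv 0$. So every solution of the lemma must satisfy the orthogonality condition $h_t(\partial_t\Phi,d\Phi_t(TS))=0$ along $\mathcal{H}$. A $0$-jet that only normalizes $|\partial_t\Phi|_{h_t}=1$ and violates orthogonality cannot be repaired by any choice of first derivatives. Conversely, once orthogonality is imposed, your second stage is vacuous. $K$ is then both tangent and normal to $\mathcal{H}$, so $\mathcal{H}$ is a null hypersurface, and its null generators are automatically pregeodesics.

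Your Gluck--Singer fallback is exactly this missing idea, and it is how the paper argues. The condition $h_t(\partial_t\Phi,\cdot)=df$ is equivalent to unit length plus orthogonality, and then $V=\grad_g(f-t)$ equals $K$ on $\mathcal{H}$. Note that $V$ is null only on the level set $\mathcal{H}$, not on an open set. So the reason is not closedness alone: it is that $\nabla_VV=\tfrac12\grad_g\, g(V,V)$, and since $g(V,V)$ vanishes on $\mathcal{H}$, this gradient is normal to $\mathcal{H}$ and hence parallel to $V$ there.

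To complete the argument you need two ingredients, both of which the paper uses. First, on the end pieces the extended metrics $\hat h_t$ already satisfy orthogonality. This is the Gauss lemma for the null geodesics leaving $S\times\{\mp\varepsilon\}\times\{\cdot\}$ with null normal initial velocity, reparametrized by $t$. Second, in the middle you may take any Riemannian metric $m$ with $m(U,U)=1$ and $m(U,E)=0$, where $U=\partial_t\Phi$ and $E=d\Phi_t(TS)$; these are well defined on $C$ because $\Phi$ is a diffeomorphism. Then set $h_t=f(t)m+(1-f(t))\hat h_t$, with $f=0$ on $[-\delta,0]\cup[1,1+\delta]$ and $f=1$ on $[c,1-c]$. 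Both conditions are preserved under convex combinations, so no Whitney extension or $1$-jet prescription is needed. With the fallback promoted to the main argument and these two points added, your proof goes through.
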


\begin{proof}
Let $\{\hat{h}_{t}\}_{t\in[-\delta,1+\delta]}$ be a smooth extension of the given family, and put $\hat{g}=\hat{h}_{t}-dt^{2}$. We first construct the diffeomorphism $\Phi$. Write $S_{-}:=S\times\{-\varepsilon\}$ and $S_{+}:=S\times\{\varepsilon\}$. The vectorfield $V_{0}+\partial_{t}$ along $S_{-}\times\{0\}$ is $\hat{g}$-null and $\hat{g}$-orthogonal to $S_{-}\times\{0\}$, and similarly $V_{1}+\partial_{t}$ along $S_{+}\times\{1\}$ is $\hat{g}$-null and $\hat{g}$-orthogonal to $S_{+}\times\{1\}$. Using the corresponding null-geodesic flows near the two boundary components, parametrized by the time-coordinate, and then smoothly joining the resulting short collars in the middle, we obtain a diffeomorphism
$$\Phi:S\times[0,1]\to C$$
with the following properties:
\begin{itemize}
\item $\Phi(p,0)=(p,-\varepsilon)$ and $\Phi(p,1)=(p,\varepsilon)$;
\item there exists $c\in(0,\frac{1}{2})$ such that, for every $p\in S$, the curve $\gamma_{p}(t)=(\Phi(p,t),t)$ is a null pregeodesic for $\hat{g}$ on $[0,c]\cup[1-c,1]$;
\item $\partial_{t}\Phi(p,0)=V_{0}(p)$ and $\partial_{t}\Phi(p,1)=V_{1}(p)$.
\end{itemize}
In particular, we have on $S\times([0,c]\cup[1-c,1])$, that
$$\hat{h}_{t}(\partial_{t}\Phi,\partial_{t}\Phi)=1,\quad \hat{h}_{t}(\partial_{t}\Phi,d_{p}\Phi_{t}(T_{p}S))=0,$$
where the second equation is due to the Gauss lemma. We now fit a metric to the curves determined by $\Phi$. At $x=\Phi(p,t)$, set
$$U_{x}:=\partial_{t}\Phi(p,t),\quad E_{x}:=d_{p}\Phi_{t}(T_{p}S)\subset T_{x}M.$$
Since $\Phi$ is a diffeomorphism, $TC=\R U\oplus E$. Let $m$ be any Riemannian metric on $M$ such that, on $C$,
$$m(U,U)=1,\quad m(U,v)=0 \text{ for all } v\in E.$$
Choose a smooth function $f:[-\delta,1+\delta]\to[0,1]$ such that
$$f=0 \text{ on }[-\delta,0]\cup[1,1+\delta],\quad f=1 \text{ on }[c,1-c].$$
Define
$$h_{t}:=f(t)m+(1-f(t))\hat{h}_{t}.$$
This is a smooth family of Riemannian metrics, and since $f=0$ on $[-\delta,0]\cup[1,1+\delta]$, it extends the originally prescribed family. Along the graph of $\Phi$ we have
$$h_{t}(U,U)=1,\quad h_{t}(U,v)=0 \text{ for all } v\in E.$$
Indeed, on $S\times([0,c]\cup[1-c,1])$ this holds for both $\hat{h}_{t}$ and $m$, hence for their convex combinations, and on $S\times[c,1-c]$ it holds because $h_{t}=m$. Let
$$F:S\times[0,1]\to M\times[0,1],\quad F(p,t)=(\Phi(p,t),t),$$
let $\mathcal{H}=F(S\times[0,1])$, and put $K:=F_{\ast}(\partial_{t})$. The integral curves of $K$ are exactly $\gamma_{p}(t)=(\Phi(p,t),t)$. At $F(p,t)$ one has $K=(U,\partial_{t})$, while for $X\in T_{p}S$ we have $F_{\ast}X=(d_{p}\Phi_{t}(X),0)$. Therefore, for $g=h_{t}-dt^{2}$,
$$g(K,K)=h_{t}(U,U)-1=0,\quad g(K,F_{\ast}X)=h_{t}(U,d_{p}\Phi_{t}(X))=0,$$
where the second equality holds because $d_{p}\Phi_{t}(X)\in E$. Since $T\mathcal{H}$ is spanned by $K$ and the vectors $F_{\ast}X$, the field $K$ is both tangent and normal to $\mathcal{H}$. Hence $\mathcal{H}$ is a null hypersurface and $K$ is a normal field on $\mathcal{H}$. A null normal field on a null hypersurface is pregeodesic, thus the integral curves $\gamma_{p}(t)=(\Phi(p,t),t)$ of $K$ are null pregeodesics with respect to $g=h_{t}-dt^{2}$. 
\end{proof}

\subsection{Constructing a strongly refocusing spacetime from a Legendrian refocusing one}

\begin{lem}
Let $M$ be a smooth manifold and let $S\subset M$ be a compact hypersurface in $M$ separating $M$ into two components. Let $U\subseteq M\setminus S$ be one of the two components. Then there exists a diffeomorphism $\varphi:M\to M$ isotopic to the identity such that $\varphi(S)\subset U$ and such that $S\sqcup \varphi(S)$ bounds a set in $M$ diffeomorphic to $S\times[0,1]$.
\end{lem}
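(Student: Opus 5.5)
The plan is to push $S$ a short distance into $U$ along a collar, using the flow of a compactly supported vector field. Since $S$ is a compact hypersurface that separates $M$, it is two-sided: a nonvanishing normal field along $S$ exists, because the two local sides of $S$ lie in different components of $M\setminus S$. So by the tubular neighborhood theorem I would choose a closed collar embedding $c:S\times[-1,1]\to M$ with $c(x,0)=x$, with $c(S\times(0,1])\subseteq U$, and with $c(S\times[-1,0))$ contained in the other component. Orientability of the normal bundle is exactly what gives a product collar rather than a twisted $I$-bundle, and the sign convention is fixed by requiring that the positive side lands in $U$.

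Next I would take a smooth bump function $\chi:[-1,1]\to[0,1]$ with compact support in $(-1,1)$ and $\chi\equiv1$ on $[-\tfrac12,\tfrac12]$. Define the vector field $Y\coloneqq c_\ast(\chi(s)\partial_s)$ on the image of $c$, and extend it by zero to all of $M$. Because $S$ is compact, $Y$ is smooth and compactly supported, so its flow $\psi_\tau$ is complete. Set $\varphi\coloneqq\psi_{1/2}$. This $\varphi$ is isotopic to the identity through the isotopy $\psi_{\tau}$, $\tau\in[0,\tfrac12]$. Along each curve $s\mapsto c(x,s)$ the field equals $\partial_s$ while $s\in[-\tfrac12,\tfrac12]$, so $\varphi(c(x,0))=c(x,\tfrac12)$. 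Hence
$$\varphi(S)=c(S\times\{\tfrac12\})\subseteq U.$$

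Then $S\sqcup\varphi(S)$ bounds the set $c(S\times[0,\tfrac12])$, which is diffeomorphic to $S\times[0,1]$ by rescaling the interval coordinate. It is a compact submanifold with boundary whose boundary is exactly $S\sqcup\varphi(S)$, and it is the region lying between the two hypersurfaces.

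The only step needing care is the existence of the two-sided product collar, that is, showing the normal bundle of $S$ is trivial given that $S$ separates $M$. I would argue this directly: suppose $S$ had a one-sided component $S_0$. Take a small loop crossing $S_0$ once transversally. Its mod $2$ intersection number with the closed hypersurface $S$ would be $1$. But $S$ separates $M$, so every closed loop crosses $S$ an even number of times, because it must return to the component it started in. This contradiction shows every component of $S$ is two-sided. Each component's collar then has a well-defined $U$-side, and the pieces assemble into the collar $c$.
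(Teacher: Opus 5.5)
Your construction is the same as the paper's: push $S$ into $U$ by the short-time flow of a compactly supported extension of a normal field pointing into $U$. Your collar and bump-function details are correct.

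The gap is in the step you yourself single out. The inference ``every closed loop crosses $S$ an even number of times, because it must return to the component it started in'' holds only if every transverse crossing of $S$ switches between the two components of $M\setminus S$. That is, the two local sides of $S$ must always lie in different components, which is exactly what you are trying to establish, so the argument is circular.

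In the generality you claim (arbitrary components of a possibly disconnected $S$), the conclusion is in fact false. In $M=\mathbb{RP}^2$, let $S=L\sqcup C$ with $L$ a projective line and $C$ a small circle in the open disk $\mathbb{RP}^2\setminus L$. Then $M\setminus S$ has exactly two components, yet $L$ is one-sided, and a loop crossing $L$ once returns to its starting component. Here the lemma itself fails: $\varphi(L)$ is homotopic to the essential loop $L$, while both components of $M\setminus S$ lie in a disk. Similarly, take the torus with $S$ the boundary of a disk $D$ together with a disjoint meridian, and $U$ the interior of $D$. The meridian has no side in $U$, so ``each component's collar has a well-defined $U$-side'' fails, and so does the lemma.

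So the statement needs $S$ connected (as in the application, where $S_p(M)$ is a sphere) and $M$ connected. Two-sidedness should then be proved as follows. Let $N$ be a tubular neighborhood of $S$. Each component of $M\setminus S$ is open in $M$, and if its closure missed $S$ it would be clopen in $M$. Hence every component of $M\setminus S$ meets $N\setminus S$. If $S$ were one-sided, $N\setminus S$ would retract onto the unit sphere bundle of the nontrivial normal line bundle, which is a connected double cover of $S$. That would force $M\setminus S$ to be connected. Hence $S$ is two-sided, and $N\setminus S=N_+\sqcup N_-$ with $N_\pm$ connected. Because $M\setminus S$ has two components, $N_+$ and $N_-$ lie in different ones, exactly one of which is $U$. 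This gives your collar with $c(S\times(0,1])\subseteq U$, and the rest of your proof goes through unchanged. The paper's one-line proof simply takes the existence of a normal field into $U$ for granted.
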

\begin{proof}
Choose a vectorfield along $S$ pointing into $U$, extend it to a compactly supported vectorfield $W$ on $M$, and let $\varphi$ be the time-$\varepsilon$ flow of $W$ for $\varepsilon>0$ sufficiently small. Then $\varphi$ is isotopic to the identity, $\varphi(S)\subset U$, and the flow $S\times[0,\varepsilon]\to M$ is a diffeomorphism onto its image, which is bounded by $S$ and $\varphi(S)$.
\end{proof}

\begin{lem}
Let $(X,g)$ be a globally hyperbolic, Legendrian refocusing spacetime. Then there exist a spacelike Cauchy surface $M\subseteq X$, a point $p\in I^{-}(M)$ and a point $q\in I^{+}(M)$ such that the fronts
$$S_{p}(M)\coloneqq \{x\in M \mid \exists \text{ null geodesic } \alpha \text{ with } \alpha(0)=p,\alpha(1)=x\}$$
and
$$S_{q}(M)\coloneqq \{x\in M \mid \exists \text{ null geodesic } \alpha \text{ with } \alpha(0)=q,\alpha(1)=x\}$$
are embedded compact hypersurfaces of $M$, with $S_{p}(M)$ separating $M$, and there exists a diffeomorphism $\varphi:(M,S_{q}(M))\to (M,S_{p}(M))$ of pairs, isotopic to the identity, which carries the coorientation of $S_q(M)$ determined by $\sky_q$ to the coorientation of $S_p(M)$ determined by $\sky_p$.
\end{lem}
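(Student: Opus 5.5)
We start from the definition: $(X,g)$ is Legendrian refocusing with respect to a point $p$ and a sequence $(q_n)$ such that $p$ is not an accumulation point of $\{q_n\}$, yet $\sky_{q_n}\to\sky_p$ in $\mathcal{L}$ in the $\mathcal{C}^\infty$ topology.

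First we fix a Cauchy temporal function $\mathcal{T}$ and locate the $q_n$. Since $\sky_{q_n}\to\sky_p$ already in $\Sigma_c$, the spacetime is refocusing, so Cauchy surfaces are compact by \cite{Low2006}. We would like the $q_n$ to stay in a compact slab $\mathcal{T}^{-1}([a,b])$. If instead $\mathcal{T}(q_n)\to\pm\infty$ along a subsequence, we choose $M$ between $p$ and far-away $q_n$; this is the configuration we want anyway, because only $\mathcal{T}(q_n)$ on the opposite side of some Cauchy surface from $p$ matters. If the $q_n$ stay in a compact slab, we pass to a subsequence with $q_n\to q\neq p$. Then $\sky_q=\lim\sky_{q_n}=\sky_p$, so the spacetime is strongly refocusing with respect to $p,q$. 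Up to swapping roles we may take $q\in J^+(p)$, and by shrinking as in \lemref{lem:minimal elements exist} we may assume the null cone of $p$ refocuses at $q$ with $\mathcal{T}(q)>\mathcal{T}(p)$. In this compact case we pick a level $M=\mathcal{T}^{-1}(c)$ with $\mathcal{T}(p)<c<\mathcal{T}(q)$ close to $\mathcal{T}(p)$, and replace $q$ by $q_n$ for large $n$ if needed so that $q\in I^+(M)$. In the divergent case we take $q=q_n$ for a large $n$ and $M$ a level set strictly between $p$ and $q$, with $p\in I^-(M)$ by choosing $c$ slightly above $\mathcal{T}(p)$.

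Next come the fronts. Choose $c$ so close to $\mathcal{T}(p)$ that the future null cone of $p$ meets $M$ before any conjugate or cut point. Then $S_p(M)$ is a small embedded sphere in $M$, namely the image of $\sky_p$ under the front projection $ST^*M\to M$ of the identification $\mathcal{N}\simeq ST^*M$. It bounds a ball, so it separates $M$, and $\sky_p$ is its conormal lift with the outward coorientation. For $q$ we use the Legendrian convergence. Since $\sky_{q_n}\to\sky_p$ in $\mathcal{C}^\infty$, and the front projection restricted to $\sky_p$ is an embedding onto $S_p(M)$ (an open condition on Legendrians in the $\mathcal{C}^1$ topology), for $n$ large the front projection of $\sky_{q_n}$ is also an embedded hypersurface. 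It is $\mathcal{C}^\infty$-close to $S_p(M)$ and hence a normal graph over it. Its conormal lift $\sky_{q_n}$ is $\mathcal{C}^0$-close to that of $S_p(M)$, so the coorientations match. A $\mathcal{C}^\infty$-small normal graph is carried to $S_p(M)$ by an ambient diffeomorphism isotopic to the identity, obtained by pushing along a normal vector field cut off by a tubular neighbourhood. Taking $q=q_n$ gives $\varphi$ carrying $S_q(M)$ to $S_p(M)$, preserving coorientation, with $\varphi$ isotopic to the identity.

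The main obstacle is ensuring $q\in I^+(M)$ with $S_q(M)=\pi(\sky_q)$. When $q_n$ lies to the future of $M$, the front of its past cone on $M$ is indeed the projection of $\sky_{q_n}$ in the $ST^*M$ picture, because each null geodesic meets $M$ exactly once, and this is what we use. If instead the $q_n$ accumulate only to the past of $p$, we reverse time orientation, which exchanges the roles of $I^\pm$ and keeps all the constructions intact. We also need the chosen level $c$ to work uniformly in $n$. Here we fix $c$ first, using only $p$, and only afterwards choose $n$ large, which is consistent because the Legendrian convergence takes place in $\mathcal{N}\simeq ST^*M$ for this fixed $M$.
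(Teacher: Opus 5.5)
Your proposal is correct and follows essentially the same route as the paper: place a Cauchy surface $M$ just to the future of $p$ so that $S_p(M)$ is a small sphere bounding a ball, use $\mathcal{C}^\infty$-convergence to see that the front projection of $\sky_{q_n}$ in $ST^*M$ is an embedding for large $n$, and move the resulting normal graph onto $S_p(M)$ by an ambient isotopy (the paper uses normal projection plus the isotopy extension theorem, where you push along a cut-off normal field). Your case split on whether $\mathcal{T}(q_n)$ stays bounded justifies a step the paper only asserts, namely that $M$ can be chosen to separate $p$ from a tail of $(q_n)$; the appeal to \lemref{lem:minimal elements exist} there is unnecessary, and in the bounded case you could even take $q$ to be the limit point itself, where $S_q(M)=S_p(M)$ and $\varphi=\mathrm{id}$.
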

\begin{proof}
Let $p\in X$ and let $(q_n)_{n\in\N}$ be a sequence with respect to which $(X,g)$ is Legendrian refocusing. After passing to a subsequence and reversing the time-orientation we can choose a spacelike Cauchy surface $M$ with $p\in I^-(M)$ and with $q_n\in I^+(M)$ for all sufficiently large $n$.  We can pick $M$ sufficiently close to $p$ to ensure that $S_p(M)$ is an embedded sphere bounding a ball of $M$, hence separates $M$. Let $\psi_M:\mathcal{N}\to ST^{\ast}M$ be the natural contactomorphism and let $\pi:ST^{\ast}M\to M$ be the projection. Since $\pi|_{\psi_M(\sky_p)}$ is an embedding and since $\sky_{q_n}\to \sky_p$ in $\mathcal{L}$, the restriction $\pi|_{\psi_M(\sky_{q_n})}$ is also an embedding for all sufficiently large $n$. Thus $S_{q_n}(M)=\pi(\psi_M(\sky_{q_n}))$ is an embedded compact hypersurface, $C^\infty$-close to $S_p(M)$ as a cooriented hypersurface. The normal projection from a small tubular neighborhood of $S_p(M)$ gives a diffeomorphism $S_{q_n}(M)\to S_p(M)$ preserving the induced coorientations. By the isotopy extension theorem this extends to a diffeomorphism of pairs $\varphi:(M,S_{q_n}(M))\to(M,S_p(M))$ isotopic to the identity. Taking $q=q_n$ for such an $n$ proves the claim.
\end{proof}

\begin{thm}\label{thm:Legendrian refocusing implies existence of strongly refocusing metric}
Let $(X,g)$ be a globally hyperbolic spacetime which is Legendrian refocusing. Then $X$ admits a globally hyperbolic metric $g'$ which is strongly refocusing.
\end{thm}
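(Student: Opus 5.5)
We will build a new metric on $M\times\R$, where $M$ is a Cauchy surface, by splicing together three slabs. The input is the preceding lemma, which supplies a spacelike Cauchy surface $M$, points $p\in I^-(M)$ and $q\in I^+(M)$, and a diffeomorphism of pairs $\varphi:(M,S_q(M))\to(M,S_p(M))$. This $\varphi$ is isotopic to the identity and matches the coorientations coming from $\sky_q$ and $\sky_p$. In the new metric, the past null cone from $p$ should run forward through a gluing collar into a copy of the past null cone of $q$, time-reversed so that it converges at $q$.

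\textbf{Step 1: normal form.} Using Bernal--S\'anchez, write $(X,g)$ near $M$ as $M\times\R$ with $g=-\beta\,dt^2+h_t$, and conformally rescale so that $\beta\equiv1$. Conformal changes preserve null pregeodesics, so this is harmless. Let $a<0$ be a time slice above $p$, so that the future null cone $C^+_p$ meets $M_a=M\times\{a\}$ in the embedded front $S_p(M_a)$. This is possible if we choose $M$ close to $p$, as in the lemma. Symmetrically, take the portion of the past cone of $q$ between $M$ and $q$, and time-reverse it. Geometrically, a time-reversed copy of the slab from $M$ up to $q$ is a slab in which null geodesics emanating transversally from the front $S_q(M)$ converge to a single point $q'$.

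\textbf{Step 2: the new spacetime.} Stack three pieces. The first is the part of $X$ below $M_a$, containing $p$ together with its cone up to the front $S_p(M_a)$. The second is a gluing slab $M\times[0,1]$. The third is the time-reversed slab of the region between $M$ and $q$, with $q$ at the top, glued to the second via $\varphi^{-1}$. In this third slab, the null geodesics of the reversed past cone of $q$ leave $S_q(M)$ normally, in the cooriented direction, and all meet at the image of $q$. In the gluing slab, apply \lemref{lem:scattering lemma} with $S=S_p(M)$ and a collar $C$ around it. The inputs are the metrics $h_t$ prescribed near the bottom and near the top (the latter pulled back through the gluing diffeomorphism), the incoming unit normals $V_0$ of the cone of $p$, and the outgoing normals $V_1$ of the reversed cone of $q$. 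The lemma extends the family $h_t$ across the slab so that each null pregeodesic leaving $S_p$ in direction $V_0$ exits at the corresponding point with direction $V_1$. Because the coorientations match, the normals agree in direction and the time-orientations are consistent. Outside the collar, any smooth interpolation of $h_t$ will do. The resulting metric $g'=h_t-dt^2$ on $M\times\R$ is globally hyperbolic, since every slice $M\times\{t\}$ is Cauchy for a metric of this product form with $M$ compact. It is also diffeomorphic to $X$.

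\textbf{Step 3: strong refocusing.} Every future null geodesic from $p$ crosses $S_p$ in the normal direction $V_0$. By Step 2 it is steered onto a null geodesic of the reversed cone of $q$, so it reaches $q$. Hence every null geodesic through $p$ passes through $q$, and $g'$ is strongly refocusing with respect to $p$ and $q$. Null geodesics through $p$ going to the past are the same curves extended, so they also pass through $q$.

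\textbf{Main obstacle.} The hardest part is the smooth matching at the slab boundaries. The lemma's metrics must agree to all orders with the given families near $t\in\{0,1\}$. On the $q$ side, transporting through $\varphi$ and reversing time must produce a smooth family $h_t$. We will use the flexibility in $\delta$, working on open collars and extending smoothly. We will also check that $\varphi$ carries $V_1$ correctly, which follows from the coorientation condition together with the normalization $\beta\equiv1$.
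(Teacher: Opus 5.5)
Your architecture is the paper's: normalize to $h_t-dt^2$ on $M\times\R$, keep the region containing $p$ at the bottom, insert a slab $M\times[0,1]$ built with \lemref{lem:scattering lemma}, and put a transported copy of the region above $M$ on top. However, the way you feed the fronts into the scattering lemma does not work as written.

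\textbf{The collar is missing.} The lemma needs the incoming and outgoing fronts to be the two \emph{distinct} boundary components $S\times\{-\varepsilon\}$ and $S\times\{\varepsilon\}$ of a collar $C$. It also needs $V_0$ pointing into $C$ and $V_1$ pointing out of $C$. Gluing the top piece by $\varphi^{-1}$ puts the outgoing front at $\varphi(S_q(M))=S_p(M)$. You then let the pregeodesics leave $S_p$ and exit ``at the corresponding point'', so the incoming and outgoing fronts sit on the same hypersurface. Because the coorientations match, $V_0$ and $\varphi_{\ast}V_q$ point to the same side $U$ of $S_p$. There is then no collar into which $V_0$ points and out of which $V_1$ points. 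Nor can there be a diffeomorphism $\Phi:S\times[0,1]\to C$ with both ends on $S_p$. The matching coorientation is exactly why the outgoing front must be displaced into $U$.

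The paper does this with a push-off $\psi$, isotopic to the identity, such that $\psi(S_p)\subset U$ and $S_p\sqcup\psi(S_p)$ bounds a collar. It then glues with $\theta=\psi\circ\varphi$ instead of $\varphi$. Then $\theta(S_q)=\psi(S_p)$, and $\theta_{\ast}V_q$ points out of the collar. Moreover, $\Theta(x,t)=(\theta(x),t+1)$ is an isometry from $(M\times[0,\infty),g)$ onto $(M\times[1,\infty),g')$.

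Your auxiliary slice $M_a$ could do the same job. You would take the incoming front to be $S_p(M_a)$ and the outgoing one to be $S_p(M)$. You would then have to verify that these two cobound a collar in $M$; this can be arranged by taking $M$ close to $p$, so that the fronts are small nested spheres. Your proposal neither sets it up this way nor checks it.

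\textbf{Drop the time reversal.} In the original metric, the null geodesics leaving $S_q(M)$ future-directed with velocity $V_q+\partial_t$ already converge at $q$. So the top piece is just the original region above $M$, shifted by one unit of time and pushed forward by the gluing diffeomorphism. A literally time-reversed slab would put $q$ at the bottom, as a point from which null geodesics diverge. That contradicts your own ``$q$ at the top''. Your ``main obstacle'' then reduces to the matching that \lemref{lem:scattering lemma} already provides, by prescribing $h_t$ on $[-\delta,0]$ and $\theta_{\ast}h_{t-1}$ on $[1,1+\delta]$.

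\textbf{Compactness of $M$.} You use it for global hyperbolicity without justification. It holds because Legendrian refocusing implies refocusing, and Cauchy surfaces of refocusing spacetimes are compact (Low).
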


\begin{proof}
By the previous lemma, choose $p,q,M$, the fronts $S_p=S_p(M)$ and $S_q=S_q(M)$, and a diffeomorphism $\varphi:(M,S_q)\to(M,S_p)$ isotopic to the identity and preserving the coorientations induced by the two skies. Since $(X,g)$ is Legendrian refocusing, hence in particular refocusing, $M$ is compact. Choose a smooth surjective Cauchy temporal function $T : X \to \mathbb{R}$
with $T^{-1}(0)=M$. By the smooth splitting theorem of Bernal and S\'anchez \cite{BernalSanchez2005}, the corresponding splitting identifies $X$ with $M\times\mathbb{R}$. Under this identification, the metric takes the form
$$g=h_t- A dt^2 $$
for a smooth positive function $A$. Since conformal changes preserve unparametrized null geodesics and hence preserve Legendrian refocusing, after
rescaling we may assume that
$$ g=h_t-dt^2.$$
Let $V_p$ be the $h_0$-unit normal to $S_p$ determined by the future-directed null geodesics from $p$ to $M$, so that the corresponding null vectorfield along $S_p$ is $V_p+\partial_t$. Similarly, let $V_q$ be the $h_0$-unit normal to $S_q$ determined by the future-directed null geodesics from $M$ to $q$, so that the corresponding null vectorfield along $S_q$ is $V_q+\partial_t$. Let $U$ be the component of $M\setminus S_p$ into which $V_p$ points. By the first lemma there is a diffeomorphism $\psi:M\to M$, isotopic to the identity, such that $\psi(S_p)\subset U$ and such that $S_p\sqcup\psi(S_p)$ bounds a collar $C\subseteq M$. Set
$$\theta:=\psi\circ\varphi.$$
Then $\theta(S_q)=\psi(S_p)$, and the vectorfield $\theta_{\ast}V_q$ is the unit normal to $\theta(S_q)$ pointing out of the collar, with respect to the metric $\theta_{\ast}h_0$. Choose $\delta>0$. On the time intervals $[-\delta,0]$ and $[1,1+\delta]$, prescribe a family of Riemannian metrics on $M$ by
$$\bar{h}_t=h_t \text{ for } t\in[-\delta,0],\quad \bar{h}_t=\theta_{\ast}h_{t-1} \text{ for } t\in[1,1+\delta].$$
We apply \lemref{lem:scattering lemma} to the compact separating hypersurface $S_p\subset M$ with collar $C$, lower normal $V_p$, upper normal $\theta_{\ast}V_q$, and the family of Riemannian metrics $\{\bar{h}_t\}_{t\in [-\delta,0]\cup[1,1+\delta]}$. We obtain an extension of the family $\{\bar{h}_t\}$ to $t\in[-\delta,1+\delta]$ and a diffeomorphism
$$\Phi:S_p\times[0,1]\to C$$
such that $\Phi(x,0)=x$, $\Phi(x,1)\in\theta(S_q)$, and the curves
$$\Gamma_x(t)=(\Phi(x,t),t)$$
are null pregeodesics for $\bar{h}_t-dt^2$. We define a smooth family of Riemannian metrics $h'_t$ on $M$ by
$$ h'_t=
\begin{cases}
h_t, & t\leq 0,\\
\bar{h}_t, & 0\leq t\leq 1,\\
\theta_{\ast}h_{t-1}, & t\geq 1.
\end{cases} $$
and set
$$g':=h'_t-dt^2.$$
Consider the projection $\mathcal{T}:M\times \R \to \R$. Then
$\grad_{g'}\mathcal{T}=-\partial_t$ is timelike and past-pointing with respect
to $g'$, hence $\mathcal{T}$ is a temporal function for $g'$. Since $M$ is
compact, $\mathcal{T}$ is proper. Therefore $\mathcal{T}$ is a Cauchy temporal function by \cite[Section~3.2]{BurtscherGarciaHeveling2024}, and hence
$(M\times\R,g')$ is globally hyperbolic. It remains to check that $(M\times \R,g')$ is strongly refocusing. Write $q=(y,\tau)$ in the splitting $X=M\times\R$, with $\tau>0$, and set $q'=(\theta(y),\tau+1)$. The map
$$\Theta:M\times[0,\infty)\to M\times[1,\infty),\quad \Theta(x,t)=(\theta(x),t+1)$$
is an isometry from $(M\times[0,\infty),g)$ to $(M\times[1,\infty),g')$. Hence the future-directed null geodesics from $\theta(S_q)\times\{1\}$ with initial vectorfield $\theta_{\ast}V_q+\partial_t$ all pass through $q'$. Now let $\alpha$ be any future-directed $g'$-null geodesic through $p$. Since $g'=g$ on $M\times(-\infty,0]$, the curve $\alpha$ reaches $M\times\{0\}$ at a point $x\in S_p$ with tangent $V_p+\partial_t$, up to positive reparametrization. By our construction, its continuation through the slab $(M\times[0,1],g')$ is the curve $\Gamma_x$, and it reaches $\theta(S_q)\times\{1\}$ with tangent $\theta_{\ast}V_q+\partial_t$, again up to positive reparametrization. In $(M\times [1,\infty),g')$, we then get that $\alpha$ passes through $q'$. Thus every null geodesic through $p$ passes through $q'$, so $(X,g')$ is strongly refocusing.
\end{proof}

\begin{cor}\label{cor:Legendrian refocusing implies integral cohomology of a CROSS}
Let $(X,g)$ be a globally hyperbolic spacetime of dimension at least $3$ which is Legendrian refocusing. Then any Cauchy surface of $(X,g)$ is compact with finite fundamental group, and its universal cover has the integral cohomology ring of a compact rank one symmetric space (CROSS).
\end{cor}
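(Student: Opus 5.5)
The plan is to reduce the statement to the strongly refocusing case via \theoref{thm:Legendrian refocusing implies existence of strongly refocusing metric}, and then to apply the contact Bott--Samelson type theorem of Frauenfelder, Labrousse, and Schlenk mentioned in the abstract. First, by \theoref{thm:Legendrian refocusing implies existence of strongly refocusing metric}, $X$ carries a globally hyperbolic metric $g'$ which is strongly refocusing. Since $(X,g)$ is Legendrian refocusing, it is refocusing, so its Cauchy surfaces are compact by Low's result quoted in the proof of \lemref{lem:minimal elements exist}.

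Next we check that $g'$ has the same Cauchy surfaces as $g$ up to diffeomorphism. In the construction of \theoref{thm:Legendrian refocusing implies existence of strongly refocusing metric}, $g'=h'_t-dt^2$ lives on $M\times\R$, and $M$ is a Cauchy surface of $(X,g)$. The slices $M\times\{t\}$ are Cauchy surfaces for $g'$. By Bernal--S\'anchez, any two smooth Cauchy surfaces of a globally hyperbolic spacetime are diffeomorphic, so every Cauchy surface of $(X,g)$ is diffeomorphic to $M$. It therefore suffices to prove the claims for $M$ viewed as a Cauchy surface of the strongly refocusing spacetime $(M\times\R,g')$.

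Now let $(M\times\R,g')$ be strongly refocusing with respect to $p$ and $q$. Choose Cauchy surfaces through $p$ and through $q$. Under the contactomorphisms $\mathcal{N}\simeq ST^{\ast}M$, the equality $\sky_p=\sky_q$ means the following. The Legendrian fiber $ST^\ast_pM$ is carried, by the contactomorphism induced from the null-geodesic flow between two Cauchy surfaces, onto another fiber. This contactomorphism arises as the time-one map of a positive contact isotopy: it is induced by the future-directed flow of lightrays from one Cauchy slice to the next. Hence a positive Legendrian isotopy takes one fiber of $ST^\ast M$ to another fiber; one could instead use a positive loop if it closes up. This is precisely the hypothesis of the Frauenfelder--Labrousse--Schlenk theorem, which applies when $\dim M\ge 2$, i.e.\ $\dim X\ge 3$. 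Its conclusion is that $M$ is compact, $\pi_1(M)$ is finite, and the universal cover of $M$ has the integral cohomology ring of a CROSS. This is the pattern already recorded in Appendix~\ref{appendix: Topological consequences of refocusing and strong refocusing}, which we would invoke directly.

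The main obstacle is the middle step of this argument: verifying that strong refocusing yields exactly the positivity hypothesis in the form Frauenfelder--Labrousse--Schlenk require. That hypothesis concerns a positive Legendrian isotopy connecting two fibers of $ST^\ast M$ for the given Cauchy surface. I would first try to cite the known argument of Chernov--Nemirovski and the appendix, which shows that a null geodesic flow between Cauchy slices yields a nonnegative (in fact positive) Legendrian isotopy. Everything else is a direct combination of earlier results.
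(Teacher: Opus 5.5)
Your proposal is correct and follows essentially the same proof as the paper. You use \theoref{thm:Legendrian refocusing implies existence of strongly refocusing metric} to obtain a strongly refocusing $g'$ on $M\times\R$ for which $M$ is still a Cauchy surface, and you then invoke \propref{prop:BottSamelsonForStronglyRefocusingSpacetime} from the appendix. The step you flag as the main obstacle, unpacking the Frauenfelder--Labrousse--Schlenk hypothesis, is not needed here: it is exactly the content of that proposition (the author's earlier result), which the paper simply quotes.
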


\begin{proof}
By \theoref{thm:Legendrian refocusing implies existence of strongly refocusing metric}, the smooth manifold $X$ admits a globally hyperbolic strongly refocusing metric $g'$. In the construction above, $g'$ is defined on the same product $X=M\times\R$, so $M$ is also a Cauchy surface for $(X,g')$. The claim follows from \propref{prop:BottSamelsonForStronglyRefocusingSpacetime}.
\end{proof}

We have shown that Legendrian refocusing spacetimes admit strongly refocusing metrics. The following two questions are open. 
\begin{question}\label{quest: Is every refocusing spacetime Legendrian refocusing}
Is every globally hyperbolic spacetime which is refocusing (at a point) also Legendrian refocusing (at that point)? 
\end{question}

\begin{question}\label{quest: Does every refocusing spacetime admit a Legendrian refocusing metric}
Does every refocusing globally hyperbolic spacetime admit a globally hyperbolic Legendrian refocusing metric? In this case every such spacetime would also admit a strongly refocusing metric by \theoref{thm:Legendrian refocusing implies existence of strongly refocusing metric}.
\end{question}

One way to resolve \questionref{quest: Is every refocusing spacetime Legendrian refocusing} would be to prove that, for the space $\Sigma$ of skies of a globally hyperbolic spacetime, $\Sigma_c = \Sigma_f$, i.e. that Low's reconstructive topology on the space of skies agrees with the topology on $\Sigma$ seen as a subspace of the space $\mathcal{L}$ of Legendrians isotopic to sky in the space of lightrays $\mathcal{N}$ with $\mathcal{C}^{\infty}$ topology. This seems difficult to prove for the following reason: It is easy to produce sequences of Legendrians in $\mathcal{L}$ which converge to a Legendrian in $\mathcal{L}$ in the Hausdorff sense without converging in the $\mathcal{C}^{\infty}$ sense. The difficulty of \questionref{quest: Is every refocusing spacetime Legendrian refocusing} lies in the fact that it is not about $\mathcal{L}$, but about the much smaller and more constrained space $\Sigma$ inside of it.

\section*{Acknowledgments}
The author thanks Stefan Suhr for pointing him to the papers of Gluck and Singer \cite{GluckSinger1978,GluckSinger1979}, which inspired the construction in \secref{sec:Legendrian refocusing spacetimes admit strongly refocusing metrics}. The author is also grateful to Vladimir Chernov for many helpful discussions. 

\section*{Declaration of generative AI and AI-assisted technologies}
During preparation of the manuscript, the author used generative AI tools to help identify possible errors, including in proofs and calculations.

\appendix
\setcounter{thm}{0}
\renewcommand{\thesection}{A}

\section{Topological consequences of refocusing and strong refocusing}\label{appendix: Topological consequences of refocusing and strong refocusing}
There are various topological consequences of (or, equivalently, obstructions to) refocusing phenomena. In this section we give a brief overview. Low proved that any Cauchy surface of a globally hyperbolic refocusing spacetime must be compact \cite{Low2006}. Chernov and Rudyak then proved that the semi-Riemannian universal cover of a globally hyperbolic refocusing spacetime of dimension at least $3$ is again globally hyperbolic and refocusing \cite[Theorem~11.5]{ChernovRudyak2008}. Combining this with Low's theorem gives the following.

\begin{prop}\label{prop:refocusing spacetimes have compact Cauchy surface with finite fundamental group}
Let $(X,g)$ be a refocusing, globally hyperbolic spacetime of dimension at least $3$. Then any Cauchy surface of $(X,g)$ is compact with finite fundamental group.
\end{prop}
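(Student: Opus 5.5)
The plan is to combine the two results quoted just before the statement: Low's theorem that Cauchy surfaces of globally hyperbolic refocusing spacetimes are compact \cite{Low2006}, and the Chernov--Rudyak theorem \cite[Theorem~11.5]{ChernovRudyak2008} that the semi-Riemannian universal cover of such a spacetime (dimension at least $3$) is again globally hyperbolic and refocusing.

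First, apply Low's theorem to $(X,g)$: any Cauchy surface $M$ is compact. By Bernal--S\'anchez, $X$ is diffeomorphic to $M\times\R$, so $\pi_1(X)\cong\pi_1(M)$. Moreover, the universal cover $\tilde X$ is diffeomorphic to $\tilde M\times\R$, with covering group $\pi_1(M)$ acting on the first factor.

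Next, equip $\tilde X$ with the pulled-back metric $\tilde g$. The covering map is a local isometry, so $\tilde g$ is time-orientable by lifting the time orientation. By \cite[Theorem~11.5]{ChernovRudyak2008}, $(\tilde X,\tilde g)$ is globally hyperbolic and refocusing, so Low's theorem applies again and its Cauchy surfaces are compact. The point requiring care is to identify a Cauchy surface of $\tilde X$ with $\tilde M$. Lift a Cauchy temporal function $\mathcal{T}$ of $(X,g)$ to $\tilde{\mathcal{T}}=\mathcal{T}\circ\pi$. This is temporal, and it is Cauchy on $\tilde X$. To see this, note that an inextendible timelike curve in $\tilde X$ projects to an inextendible timelike curve in $X$, so $\tilde{\mathcal{T}}$ is unbounded in both directions along it and therefore crosses each level set exactly once. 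Hence $\tilde{\mathcal{T}}^{-1}(0)=\pi^{-1}(\mathcal{T}^{-1}(0))\cong\tilde M$ is a Cauchy surface of $\tilde X$.

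Finally, $\tilde M$ is compact, so the covering $\tilde M\to M$ has finitely many sheets. The number of sheets equals $|\pi_1(M)|$, which is therefore finite. Since all Cauchy surfaces of $(X,g)$ are diffeomorphic, the conclusion holds for every Cauchy surface. The only step needing attention is the Cauchy property of the lifted temporal function, together with checking that the hypotheses of \cite{ChernovRudyak2008} (dimension at least $3$, global hyperbolicity) match ours. Everything else is direct citation.
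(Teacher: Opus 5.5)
Your proposal is correct and is essentially the paper's argument: the Chernov--Rudyak theorem makes the universal cover globally hyperbolic and refocusing, Low's theorem then makes its Cauchy surface $\tilde M$ compact, and so $\pi_1(M)$ is finite. Your lifted-temporal-function argument showing that $\pi^{-1}(M)\cong\tilde M$ is a Cauchy surface of $\tilde X$ just spells out a step the paper leaves implicit.
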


In the strongly refocusing case one obtains information about the cohomology as well. The author proved the following Bott-Samelson type result for strongly refocusing spacetimes \cite[Proposition~3.13]{Bauermeister2026}.

\begin{prop}\label{prop:BottSamelsonForStronglyRefocusingSpacetime}
Let $(X,g)$ be a globally hyperbolic, strongly refocusing spacetime of dimension at least $3$. Then any Cauchy surface of $(X,g)$ is compact with finite fundamental group, and its universal cover has the integral cohomology ring of a CROSS.
\end{prop}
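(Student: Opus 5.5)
The plan is to reduce to the universal cover and then to a Riemannian/contact Bott--Samelson statement. Compactness of a Cauchy surface $M$ and finiteness of $\pi_1(M)$ come directly from \propref{prop:refocusing spacetimes have compact Cauchy surface with finite fundamental group}, since strongly refocusing implies refocusing. For the cohomology statement, I will pass to the semi-Riemannian universal cover $\tilde X \simeq \tilde M\times\R$, with $\tilde M$ compact because $\pi_1(M)$ is finite. First I check that strong refocusing lifts: if every null geodesic through $p$ passes through $q$, fix a lift $\tilde p$. The null geodesics through $\tilde p$ are lifts of those through $p$, so each meets the finite fibre over $q$. The set of null directions at $\tilde p$ whose geodesic hits a given lift $\tilde q_i$ is closed, and these sets are disjoint because their endpoints in $\tilde M\times\R$ have distinct positions. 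Since the sky of $\tilde p$ is connected (a sphere $S^{d-2}$ with $d\ge 3$), all these geodesics hit a single $\tilde q$.

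Next I would identify the space of lightrays $\mathcal N$ of $\tilde X$ with $ST^{\ast}\tilde M$, using a Cauchy surface through $\tilde p$. Under this identification $\sky_{\tilde p}$ is the fibre $ST^{\ast}_{\tilde p}\tilde M$. Likewise $\sky_{\tilde q}$ is a fibre for a Cauchy surface through $\tilde q$. Strong refocusing says $\sky_{\tilde p}=\sky_{\tilde q}$ as subsets of $\mathcal N$. The Legendrian isotopy between the two identifications, induced by flowing along a Cauchy temporal function, then produces a positive Legendrian isotopy (a positive path of contact transformations, namely the time-evolution map from one Cauchy slice to the other). This path starts at a fibre and ends at a fibre of $ST^{\ast}\tilde M$. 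Positivity holds because future-directed lightrays move in the Reeb-positive direction, as in Chernov--Nemirovski.

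Finally I invoke the contact Bott--Samelson theorem of Frauenfelder, Labrousse and Schlenk. It says that if $ST^{\ast}N$ admits a positive Legendrian isotopy from a fibre to itself (equivalently, the fibre is "positively loop-like" in this sense), then $N$ is compact with finite $\pi_1$ and its universal cover has the integral cohomology ring of a CROSS. Applying it to $N=\tilde M$ gives the claim for $\tilde M$, which is the universal cover of $M$.

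The main obstacle is the second step: producing a positive Legendrian isotopy returning the fibre to itself, in exactly the form required by the cited theorem. The delicate part is that their hypothesis concerns Reeb-type positive flows from a fibre to a fibre, so positivity of the sky-to-sky path must be established carefully. I would first try the causal description of positivity, namely that the derivative of the path evaluated with the contact form is positive because of the future-directed time evolution. This is the content of \cite[Proposition~3.13]{Bauermeister2026}, which I would cite for the details.
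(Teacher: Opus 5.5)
Your proposal follows the same route as the paper: compactness and finiteness of $\pi_1$ come from \propref{prop:refocusing spacetimes have compact Cauchy surface with finite fundamental group} (Low and Chernov--Rudyak), and the cohomology statement is obtained by citing \cite[Proposition~3.13]{Bauermeister2026}, which the paper describes as chiefly a consequence of the contact Bott--Samelson theorem \cite[Theorem~1.13]{FrauenfelderLabrousseSchlenk2015}, with no further argument given. Your extra lift to $\tilde X$ is correct, since the connected null sphere for $d\ge 3$ forces a single lift $\tilde q$, though it is probably unnecessary because the Bott--Samelson conclusion is already phrased in terms of the universal cover; as in the paper, the substantive step (the positive sky isotopy and matching the exact hypothesis of the contact theorem) is delegated to these citations rather than proved.
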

The result on cohomology is chiefly a consequence of a more general contact-theoretic statement proved by Frauenfelder, Labrousse, and Schlenk \cite[Theorem~1.13]{FrauenfelderLabrousseSchlenk2015}. That the contact-theoretic result implied the Lorentzian result was known to Chernov and Nemirovski but never published by them.

For refocusing spacetimes, if the analogous cohomological statement holds remains an open question. 
\begin{question}\label{quest: do Cauchy surfaces of refocusing spacetimes have universal cover with cohomology of a CROSS?}
For every globally hyperbolic, refocusing spacetime of dimension at least $3$, does any Cauchy surface of its universal cover have the integral cohomology ring of a CROSS?
\end{question}
This question was also posed in talks given by Chernov and Rudyak. One way to resolve \questionref{quest: do Cauchy surfaces of refocusing spacetimes have universal cover with cohomology of a CROSS?} would be to answer \questionref{quest: Does every refocusing spacetime admit a Legendrian refocusing metric} affirmatively. Conversely, an affirmative answer to \questionref{quest: do Cauchy surfaces of refocusing spacetimes have universal cover with cohomology of a CROSS?} might be an instrumental step in resolving \questionref{quest: Does every refocusing spacetime admit a Legendrian refocusing metric}.

\begin{defin}[$Y^x_l$ manifold]\label{def:Yxl}
Let $(M,h)$ be a complete Riemannian manifold, $x\in M$, and $l>0$. We call $(M,h)$ a \textit{$Y^x_l$ manifold} if $\alpha(l)=x$ for every unit-speed geodesic $\alpha$ with $\alpha(0)=x$.
\end{defin}

The notation is due to Besse \cite[Definitions~7.7]{Besse}. In \cite[Definition 2.2]{ChernovKinlawSadykov2010}, Chernov, Kinlaw and Sadykov introduced the following weakening of \defref{def:Yxl}.

\begin{defin}[$\tilde{Y}^x$ manifold]\label{def:tildeY}
Let $(M,h)$ be a complete Riemannian manifold and $x\in M$. We call $(M,h)$ a \textit{$\tilde{Y}^x$ manifold} if there is $\bar\varepsilon>0$ such that for every $0<\varepsilon<\bar\varepsilon$ there exist $y\in M$ and $l>\varepsilon$ with $d(x,\alpha(l))<\varepsilon$ for every unit-speed geodesic $\alpha$ with $\alpha(0)=y$.
\end{defin}

It is obvious that every $Y^x_l$ manifold is a $\tilde{Y}^x$ manifold. The converse is an open question.

\begin{question}\label{quest:Are there tilde(Y)^x_l manifolds which are not Y^x_l manifolds?}
Are there $\tilde{Y}^x$ manifolds which are not $Y^x_l$ manifolds for any $l>0$?
\end{question}

\begin{defin}[spacetime associated to a Riemannian manifold]\label{def:associated}
The \textit{spacetime associated to} a Riemannian manifold $(M,h)$ is $(X,g)=(M\times\R,\,h-dt^2)$, time-oriented by declaring $\partial_t$ to be future-pointing.
\end{defin}

If $(M,h)$ is complete, then $(X,g)$ is geodesically complete and globally hyperbolic, and each slice $M\times\{t\}$ is a smooth spacelike Cauchy surface. The associated spacetime to a $Y^x_l$ manifold is strongly refocusing with respect to $(x,0)$ and $(x,l)$, and the associated spacetime to a $\tilde{Y}^x$ manifold is refocusing \cite{ChernovKinlawSadykov2010}. Applying \propref{prop:refocusing spacetimes have compact Cauchy surface with finite fundamental group} and applying either \propref{prop:BottSamelsonForStronglyRefocusingSpacetime} to the associated spacetime or \cite[Theorem~1.13]{FrauenfelderLabrousseSchlenk2015} directly, gives the following results.

\begin{thm}
Let $(M,h)$ be a $\tilde{Y}^x$ manifold of dimension at least $2$. Then $M$ is compact with finite fundamental group. If $(M,h)$ is a $Y^x_l$ manifold then its universal cover has the integral cohomology ring of a CROSS.
\end{thm}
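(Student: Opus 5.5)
The theorem has two parts: compactness with finite fundamental group for $\tilde{Y}^x$ manifolds, and the CROSS cohomology for $Y^x_l$ manifolds. In both cases I will pass to the associated spacetime $(X,g)=(M\times\R,\,h-dt^2)$ of \defref{def:associated} and then invoke the appendix results. Since $\dim M\geq 2$, we have $\dim X\geq 3$, so the dimensional hypotheses of \propref{prop:refocusing spacetimes have compact Cauchy surface with finite fundamental group} and \propref{prop:BottSamelsonForStronglyRefocusingSpacetime} are met. Since $(M,h)$ is complete, $(X,g)$ is globally hyperbolic and each slice $M\times\{0\}$ is a smooth spacelike Cauchy surface diffeomorphic to $M$.

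For the first part, I would check that the associated spacetime of a $\tilde{Y}^x$ manifold is refocusing in the sense of formulation (iii), as cited from \cite{ChernovKinlawSadykov2010}. The null geodesics through $(y,0)$ are exactly the curves $s\mapsto(\alpha(s),s)$ for unit-speed $h$-geodesics $\alpha$ with $\alpha(0)=y$. Take $\varepsilon_n\to 0$, and let $y_n$ and $l_n>\varepsilon_n$ be as in \defref{def:tildeY}. Set $q_n=(y_n,-l_n)$ and $p=(x,0)$. Every null geodesic through $q_n$ then passes through $(\alpha(l_n),0)$, which lies within distance $\varepsilon_n$ of $p$ in the slice. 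This gives entry into every neighborhood $V$ of $p$ for large $n$.

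The delicate point is that $p$ must not be an accumulation point of $(q_n)$. This needs $l_n$ bounded away from $0$, or else $y_n$ far from $x$. If $l_n\to 0$, the condition $d(x,\alpha(l_n))<\varepsilon_n$ forces $d(x,y_n)<2\varepsilon_n$, and then the geodesic sphere of radius $l_n>\varepsilon_n$ around $y_n$ cannot lie in the $\varepsilon_n$-ball around $x$. I would make this precise by choosing $\alpha$ pointing away from $x$, for which $d(x,\alpha(l_n))\geq l_n-d(x,y_n)$ is not quite enough. It is cleaner to use small $l_n$, where the exponential map is a near-isometry, so the image sphere has diameter close to $2l_n>2\varepsilon_n$, a contradiction. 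After passing to a subsequence, $(q_n)$ therefore stays away from $p$. \propref{prop:refocusing spacetimes have compact Cauchy surface with finite fundamental group} then shows that the Cauchy surface $M$ is compact with finite fundamental group.

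For the second part, a $Y^x_l$ manifold gives, by the same description of null geodesics, that every null geodesic through $(x,0)$ passes through $(x,l)$. So $(X,g)$ is strongly refocusing with respect to these two distinct points, and \propref{prop:BottSamelsonForStronglyRefocusingSpacetime} shows that the universal cover of $M$ has the integral cohomology ring of a CROSS. The main obstacle is the non-accumulation argument in the $\tilde{Y}^x$ case. If it cannot be made clean, I would simply cite \cite{ChernovKinlawSadykov2010} for refocusing, as the paper does.
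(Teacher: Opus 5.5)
Your proposal is correct and follows the paper's route. The paper likewise passes to the associated spacetime $(M\times\R,h-dt^2)$, notes that it is refocusing (cited from \cite{ChernovKinlawSadykov2010}) in the $\tilde{Y}^x$ case and strongly refocusing with respect to $(x,0)$ and $(x,l)$ in the $Y^x_l$ case, and then applies \propref{prop:refocusing spacetimes have compact Cauchy surface with finite fundamental group} and \propref{prop:BottSamelsonForStronglyRefocusingSpacetime}.

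Your optional direct check of non-accumulation has two loose steps. First, the triangle inequality only gives $d(x,y_n)<\varepsilon_n+l_n$, not $d(x,y_n)<2\varepsilon_n$. Second, the near-isometry heuristic has the same slack you flagged, since "diameter close to $2l_n$" need not exceed $2\varepsilon_n$ when $l_n$ is barely larger than $\varepsilon_n$. The clean fix is exact: once $2l_n$ is below the injectivity radius near $x$, the points $\exp_{y_n}(\pm l_n v)$ are exactly $2l_n>2\varepsilon_n$ apart, which contradicts both lying in the $\varepsilon_n$-ball around $x$.
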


Bott and Samelson \cite{Bott1954,Samelson1963} and Bérard-Bergery \cite{BerardBergery1977} proved slightly weaker versions of the cohomological result many decades earlier.


\end{document}